\newcommand{\RNum}[1]{\uppercase\expandafter{\romannumeral #1\relax}}
\newtheorem{thm}{Theorem}[section]
\newtheorem{lem}[thm]{Lemma}
\newtheorem{prop}[thm]{Proposition}
\theoremstyle{definition}
\newtheorem{defn}[thm]{Definition}
\newtheorem{rmk}[thm]{Remark}
\newtheorem{question}[thm]{Question}
\numberwithin{equation}{section}
\newcommand\be{\begin{equation}}
\newcommand\ba{\begin{eqnarray}}
\newcommand\ee{\end{equation}}
\newcommand\ea{\end{eqnarray}}
\newcommand\restr[2]{\ensuremath{\left.#1\right|_{#2}}}
\def\C{{\mathbb C}}
\def\Q{{\mathbb Q}}
\def\R{{\mathbb R}}
\def\Z{{\mathbb Z}}
\def\P{{\mathbb P}}
\def\N{{\mathbb N}}
\def\K{{\mathcal{K}}}
\DeclareMathOperator{\Aut}{Aut}
\DeclareMathOperator{\Frac}{Frac}
\DeclareMathOperator{\val}{val}
\title[Common zeros of iterated rational functions]
{A finiteness result for common zeros of iterates of rational functions}
\thanks{The second author was supported in part by NSERC grant RGPIN-2022-02951.}
\author[Chatchai Noytaptim]{Chatchai Noytaptim}
\address{University of Waterloo \\
Department of Pure Mathematics \\
Waterloo, Ontario \\
Canada  N2L 3G1}
\email{cnoytaptim@uwaterloo.ca, chatchai.noytaptim@gmail.com}
\author{Xiao Zhong}
\address{University of Waterloo \\
Department of Pure Mathematics \\
Waterloo, Ontario \\
Canada  N2L 3G1}
\email{x48zhong@uwaterloo.ca}
\date{\today}
\subjclass[2020]{37P05, 37P30}
\keywords{arithmetic dynamics, iterated rational functions, arithmetic equidistribution, common zeros}
\begin{document}

\maketitle
\begin{abstract}
    Answering a question asked by Hsia and Tucker in their paper on the finiteness of greatest common divisors of iterates of polynomials, we prove that if $f, g \in \C(X)$ are compositionally independent rational functions and $c \in \C(X)$, then there are at most finitely many $\lambda\in\mathbb{C}$ with the property that there is an $n$ such that $f^n(\lambda) = g^n(\lambda) = c(\lambda)$, except for a few families of $f, g \in \Aut(\P^1_\C)$ which give counterexamples.
\end{abstract}
\section{Introduction}
\subsection{Historical background.} Bugeaud, Corvaja, and Zannier \cite{BCZ03} provided an upper bound on the greatest common divisor (GCD) of two integer sequences using tools from Diophantine approximation. More precisely, they showed that for any multiplicatively independent integers $a,b\geq 2$ and for any $\epsilon>0$, then
$$\mathrm{gcd}(a^n-1,b^n-1)<\mathrm{exp}(\epsilon n)$$ for sufficiently large $n\geq1.$\\
\indent In 2004, Ailon and Rudnick \cite{AR04} obtained the first instance of a GCD-type result over function field of characteristic $0$. In fact, they proved that for any nonconstant multiplicatively independent polynomials $a,b\in\mathbb{C}[x]$ there exists a polynomial $h\in \mathbb{C}[x]$ such that $\mathrm{gcd}(a^n-1,b^n-1)|h$ for all $n\geq 1.$ Ostafe \cite{Os16} pointed out that the result of Ailon-Rudnick holds generally for any pair $(m,n)\in\mathbb{N}\times \mathbb{N}$. Ostafe also provided an upper bound on the degree of $\mathrm{gcd}(a^m-1,b^n-1)$ in terms of degrees of $a$ and $b$.\\
\indent There are many generalizations and extensions of GCD-type results in various contexts, see for example \cite{CZ13}, \cite{GHT17}, \cite{GHT18}, \cite{Gr20}, \cite{GW20}, \cite{Hu20}, \cite{Le19}, \cite{Lu05}, \cite{Si04a}, \cite{Si04b}, \cite{Si05} and \cite{Za12}.\\
\indent Motivated by taking multiplicative powers of polynomials, one can study GCD-type results of iterates of rational functions. Let $f(x)\in\mathbb{C}(x)$ be a rational function with complex  coefficients. Here and what follows, $f^n$ denotes the $n$-fold composition of $f$ with itself.
\begin{defn} Two rational functions $f$ and $g$ are said to be compositionally independent if there exist positive integers $i_1,...,i_s,j_1,...,j_s,l_1,...,l_t,m_1,...,m_t$ such that
$$f^{i_1}\circ g^{j_1}\circ \cdots \circ f^{i_s}\circ g^{j_s}=f^{l_1}\circ g^{m_1}\circ \cdots \circ f^{l_t}\circ g^{m_t}$$
then $s=t$ and $i_k=l_k, j_k=m_k$ for $1\leq k\leq s.$
In other words, the semigroup generated by $f$ and $g$ under composition is isomorphic to the free semigroup of two generators.
\end{defn}

Hsia and Tucker \cite{HT17} first addressed the question posed by Ostafe \cite[$\S4.2$]{Os16} in dynamical setting. Under suitable compositional assumptions, they showed that for any polynomial maps $f, g$ of degree $\geq 2$  with complex coefficients and $c(x)\in\mathbb{C}[x]$ there are only finitely many irreducible factors of $$\mathrm{gcd}(f^m(x)-c(x), g^n(x)-c(x))$$ for some positive integers $m$ and $n$ (cf. \cite[Theorem 1]{HT17}). Interestingly, Theorem $2$ in \cite{HT17} can be viewed as a compositional analog result of Ailon-Rudnick  for linear polynomials. However, for nonlinear polynomials, their result holds conditionally on the assumption that $c(x)$ is not a complex constant which is in ramified cycle of both $f$ and $g$ (cf. \cite[Thoerem 4]{HT17}). 

\subsection{Statement of main results.} \label{sec: mainresults}
 In this article, we answer the following  question posed by Hsia and Tucker.   
 \begin{question} \cite[Question 18]{HT17} \label{qu: HTquestion}
Let $f,g$ be two nonconstant, compositionally independent rational functions with complex coefficients.   Let $c$ be any rational function with complex coefficients. Is it true that there must be at most finitely many $\lambda\in\mathbb{C}$ such that $f^n(\lambda)=g^n(\lambda)=c(\lambda)$ for some positive integer $n$?
\end{question}
It turns out that Question \ref{qu: HTquestion} is not true in general. This can be seen from the counter-examples provided in Remark (\ref{rmk: counter-1}),(\ref{rmk: counter-2}), (\ref{rmk: counter-3}), (\ref{rmk: counter-4}) and (\ref{rmk: counter-5}).\\

However, after excluding these families of rational functions $f$ and $g$, we obtain a finiteness result of common zeros of iterated rational functions in a similar spirit to Question \ref{qu: HTquestion}. More precisely, we state our main results as follows.
 \begin{thm}  \label{thm: mainlinear}  Let $f(X)$, $g(X)$ be automorphisms on $\P^1$ defined over $\C$ and $c(X)$ be a rational function defined over $\C$. If the semigroup generated by $f(X)$ and $g(X)$ under compositions is free, then there are only finitely many $\lambda \in \C$ such that 
    \begin{equation} \label{eq: targeteq1}
       f^n(\lambda) = g^n(\lambda) =  c(\lambda) 
    \end{equation}
    for some positive integer $n$ unless $f(X)$, $g(X)$ are simultaneously conjugated by an automorphism on $\P^1(\C)$ to one of the following:
    \begin{enumerate}
        \item $\alpha X + \beta$, $X/(\gamma X + \delta)$, with some $\alpha, \delta, \gamma, \beta \in \C^*$ such that one of $\alpha /\delta $ and $\alpha \delta$ is a root of unity;
        \item $\alpha X + \beta$, $\delta X + \gamma$,  with some $\alpha, \delta \in \C^*$ such that $\alpha$ and $\delta$ are not roots of unity, $\gamma$ and $\beta$ are not both $0$, and either $\alpha /\delta $ is a root of unity other than $1$ or one of $\alpha^2/\delta$ and $\delta^2/\alpha$ is a root of unity.
    \end{enumerate}
 \end{thm}
 When at most one of rational functions $f$ and $g$ is of degree one, we can relax some conditions on $(m,n)\in\mathbb{N}\times \mathbb{N}$. Also, the finiteness result on the number zeros of both equations $f^m(x)-c(x)=0$ and $g^n(x)-c(x)=0$ holds under natural assumptions. This is reminiscent of polynomial maps counterpart (cf. \cite[Theorem 1]{HT17}).
 \begin{thm} \label{thm : mainnonlinear} Let $f$ and $g$ be two compositionally independent rational functions with complex coefficients,  at least one of which has degree greater than one. Suppose that $c(x)\in \mathbb{C}(x)$ is not a compositional power of $f$ or $g$. Then there are only finitely many $\lambda\in\mathbb{C}$ such that
 $$f^m(\lambda)=g^n(\lambda)=c(\lambda)$$
for some positive integers $m$ and $n$.
 \end{thm}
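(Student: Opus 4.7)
The plan is to argue by contradiction via arithmetic equidistribution on the Berkovich projective line, combined with a rigidity input for rational maps that share their equilibrium measures.

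\textbf{Reduction and extraction.} Suppose, toward contradiction, there are infinitely many distinct $\lambda_k \in \C$ with $f^{m_k}(\lambda_k) = g^{n_k}(\lambda_k) = c(\lambda_k)$ for positive integers $m_k, n_k$. A standard specialization (Lefschetz-principle) argument lets one assume $f, g, c$ are defined over a number field $K$ and all $\lambda_k \in \var K$. Since $c$ is not a compositional power of $f$, the rational map $f^m - c$ is non-zero for each $m \geq 1$ and hence has only finitely many zeros on $\P^1(\var K)$; the same holds for $g^n - c$. If the exponents $m_k$ stayed bounded, a constant subsequence would confine $\{\lambda_k\}$ to a single finite zero set, a contradiction; so $m_k \to \infty$ along some subsequence, and symmetrically $n_k \to \infty$.

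\textbf{Height vanishing when $\deg f, \deg g \geq 2$.} Writing $\hat h_f, \hat h_g$ for the canonical heights of $f, g$ and $h$ for a Weil height on $\P^1$, the relation $f^{m_k}(\lambda_k) = c(\lambda_k)$ yields
\[
(\deg f)^{m_k}\, \hat h_f(\lambda_k) \;=\; \hat h_f\!\bigl(c(\lambda_k)\bigr) \;=\; O\!\bigl(h(\lambda_k)+1\bigr).
\]
Using $\hat h_f = h + O(1)$ and that $(\deg f)^{m_k}$ eventually dominates $\deg c$, this forces $h(\lambda_k) = O(1)$ and then $\hat h_f(\lambda_k) \to 0$. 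The same argument with $g$ gives $\hat h_g(\lambda_k) \to 0$.

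\textbf{Equidistribution and rigidity.} If the Galois degrees $[K(\lambda_k):K]$ stay bounded, Northcott finiteness (bounded height plus bounded degree) ends the argument. Otherwise, the arithmetic equidistribution theorem of Baker--Rumely, Chambert-Loir, and Favre--Rivera-Letelier, applied at each place $v$ of $K$, shows that the Galois orbit measures of $\{\lambda_k\}$ on $\P^{1,\mathrm{an}}_v$ converge to $\mu_{f,v}$ (from $\hat h_f(\lambda_k) \to 0$) and to $\mu_{g,v}$ (from $\hat h_g(\lambda_k) \to 0$). Hence $\mu_{f,v} = \mu_{g,v}$ at every $v$. A rigidity theorem (Yuan--Zhang type, or Levin's characterization of rational maps with a shared Julia measure promoted across all places) then produces positive integers $a, b$ with $f^a = g^b$, contradicting compositional independence of $f$ and $g$.

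\textbf{Mixed case $\deg f = 1 < \deg g$.} If $f$ has finite order, the family $\{f^m\}_{m \geq 1}$ is finite, so $\bigcup_m \{\lambda : f^m(\lambda) = c(\lambda)\}$ is a finite union of finite sets (each non-empty equation $f^m - c = 0$ has finitely many zeros since $c \neq f^m$ by hypothesis), contradicting infinitude. If $f$ has infinite order, then $f^m(x) - c(x) = 0$ has at most $\deg c + 1$ roots for every $m$, so $[K(\lambda_k):K]$ is bounded uniformly; combined with $\hat h_g(\lambda_k) \to 0$, hence $h(\lambda_k) = O(1)$, Northcott again contradicts infinitude. The principal obstacle is the rigidity step, namely translating the global equality $\mu_{f,v} = \mu_{g,v}$ at every place into a common iterate $f^a = g^b$; Lattès-like examples show the global hypothesis (rather than equality at a single archimedean place) is essential.
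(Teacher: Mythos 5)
Your overall strategy matches the paper's: extract a sequence of solutions, show it has small canonical height for both $f$ and $g$, invoke arithmetic equidistribution to equate the equilibrium measures $\mu_{f,v}=\mu_{g,v}$ at all places, and then appeal to a rigidity input. The paper packages these steps as Lemma~\ref{smallpoints}, Proposition~\ref{prop: shareheight}, and Proposition~\ref{prop: degreegreaterthan1}, and handles the mixed-degree case via Northcott exactly as you indicate.

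However, your concluding rigidity step contains a genuine gap. You claim that equality of all local equilibrium measures produces positive integers $a,b$ with $f^a=g^b$. This is false: equality of measures (equivalently $\hat h_f=\hat h_g$, equivalently $\Prep(f)=\Prep(g)$) does \emph{not} force a common iterate. Already $f(x)=x^2$ and $g(x)=x^3$ share all equilibrium measures (Haar on the unit circle archimedeanly, the Gauss-point mass at every finite place), but $f^a\neq g^b$ for all $a,b\geq 1$; the same phenomenon occurs for pairs of Latt\`es maps coming from distinct isogenies of a single elliptic curve, and for pairs $f,\,\sigma\circ f$ with $\sigma$ a symmetry of the Julia set. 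Neither Yuan--Zhang-type arithmetic rigidity nor Levin's Julia set results deliver a common iterate; they deliver only the shared-preperiodic-points conclusion. What the paper actually uses at this point is the Tits alternative for endomorphisms of $\P^1$ of Bell--Huang--Peng--Tucker \cite{BHPT}: if $\Prep(f)=\Prep(g)$ then the semigroup $\langle f,g\rangle$ under composition is \emph{not free}, which is precisely the negation of compositional independence. That weaker conclusion is both true and sufficient; your stronger claim is what fails.

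A secondary issue: your reduction ``by a Lefschetz-principle argument'' to a number field is not carried out in the paper, because specialization does not obviously preserve compositional independence or the condition that $c$ is not a compositional power. The paper instead works over the finitely generated field $K$ directly, treating the number field and function field cases separately (using Baker's and Benedetto's non-isotriviality results, Theorem~\ref{thm: BakBenIsotrivia}, for the function field case, and a separate argument when one of $f,g$ is isotrivial). Your argument would need to address this, or adopt the paper's case split.
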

 Notice that the same statement allowing distinct $n$ and $m$ for the case that $f$ and $g$ are both automorphisms on $\P^1(\C)$ doesn't hold. There is a counter-example in the beginning of Section $3$ of \cite{HT17}.
 \subsection{Proof strategy.}
  Hsia and Tucker \cite{HT17} combine various tools and techniques from Diophantine geometry, complex dynamics, and number theory. Our approach is akin to their method. There are technical difficulties to overcome for rational functions. We briefly explain our ideas of proving Theorem \ref{thm: mainlinear} and Theorem \ref{thm : mainnonlinear}.\\
 \indent For automorphisms $f$ and $g$ defined over $\overline{\mathbb{Q}}$, we heavily hinge on tools from Diophantine geometry (e.g., Roth's theorem, Siegel's theorem, and $S$-unit theorem). After suitable excluding $f,g$ (see $\S \ref{sec: mainresults}$), we show that $f$ and $g$ must share a common fixed point (see Lemma \ref{lem: reduce-to-sharingfixpoint}). Then we dynamically conjugate by moving such fixed point to $\infty$, we may assume that $f(x)=\alpha x+\beta$ and $g(x)=\delta x+\gamma$ with $\alpha, \beta, \gamma, \delta\in\overline{\mathbb{Q}}^*$. We break our computation into several cases depending on whether $\alpha$ and $\delta$ is a root of unity to show that if there are infinitely many solutions to 
 $$f^n(x)=g^n(x)=c(x)$$ where $c(x)$ is any rational function with complex coefficients, then $f$ and $g$ are not compositionally independent. In other words, the semigroup generated by $f$ and $g$ under composition is not free (see Theorem \ref{thm: mainaut}). Using specialization argument, we obtain the desired result over $\mathbb{C}$. \\
\indent When exactly one  of rational functions $f, g\in\mathbb{C}(x)$ has degree greater than one, the proof follows verbatim in the polynomial maps setting as a consequence of Northcott's property explained in \cite[Proposition 9]{HT17}. \\
\indent In the case both rational functions $f,g\in\mathbb{C}(x)$ are  of degree greater than $1$  and $c(x)\in\mathbb{C}(x)$. Under suitable compositional assumptions, we show that the infinite sequence of solutions to $$f^{m_i}(x_i)=g^{n_i}(x_i)=c(x_i)$$ as $i\rightarrow+\infty$ (so $m,n\rightarrow+\infty$) form a sequence of small dynamical height (Lemma \ref{smallpoints}).
We, then, apply the arithmetic equidistribution theorem of small points (in reminiscence of \cite{BD11}, \cite{BD13}, \cite{GHT13}, \cite{GHT15}, and \cite{PST12}) to deduce that  dynamical height attached to $f$ and $g$ must be identical (Proposition \ref{prop: shareheight}) and hence $f$ and $g$ must share the same set of preperiodic points (i.e., points of finite forward orbit under iteration). Unlike polynomial maps setting, one cannot appeal to the classification of polynomials with the same Julia set of \cite{BE87} and \cite{SS95}. Alternatively, we apply the Tits alternative for rational functions of \cite{BHPT} to deduce that $f$ and $g$ are not compositionally independent. 

\section{Preliminaries}
\subsection{Absolute values over global fields.} Here and throughout this section, $K$ denote a global field (unless otherwise stated) equipped with a set of inequivalent absolute
values (places) satisfying the product formula.  For example, $K$ could be a number field or a function field of  positive transcendence degree over
another field $k$. It was pointed out in \cite{Art06} that a global field which has an archimedean place must be a number field.\\
\indent If $K$ is a number field, we let $M_K$ to be the set of all absolute values of $K$ which extends the absolute values of $\mathbb{Q}$. If $K$ is a function field of finite transcendence degree $m\geq 1$ defined over a field $k$, then $M_K$ is the set of all absolute values on $K$ associated to the irreducible hypersurfaces of the projective space $\mathbb{P}^m_k$ (whose function field is $K':=k(t_1,...,t_m)$ where $t_1,..,t_m$ are algebraically independent--over $k$--functions in $K$). Thus there exist $N_v\geq 1$ such that the product formula $\prod_{v\in M_K}|x|_v^{N_v}=1$ holds for any nonzero $x\in K$. (see \cite{BG06} and \cite{La83}).\\
\indent Let $K_v$ be the completion of $K$ with respect to absolute value $|\cdot|_v$. Then $\mathbb{C}_v$ is the completion of the algebraic closure of $K_v$ and it is algebraically closed and complete. Note that, by abuse notation, we still denote the absolute value on $\mathbb{C}_v$ 
 which extends uniquely from $|\cdot|_v$ on $K$ by $|\cdot|_v$.

 \begin{rmk}
     When there is no confusion, we will simplify the notation to use $| \cdot |$ to denote the usual Archimedean norm on $K$.
 \end{rmk}
 
\subsection{Dynamical height and preperiodic points.} Let $S$ be the finite set of Galois conjugates $\mathrm{Gal}(\overline{K}/K)\cdot x$  of $x$ in $\mathbb{P}^1(\overline{K})$. Then we define the Weil height $h :\mathbb{P}^1(\overline{K})\rightarrow[0,+\infty)$ by 
$$h(x):=\frac{1}{|S|}\sum_{x\in S}\sum_{v\in M_K}N_v\log\max\{1,|x|_v\}.$$ In light of Baker-Rumely's construction \cite[$\S 7.9$]{BR10}, our Weil height is defined as the average over Galois conjugate of points in $\overline{K}$. It was pointed out that  this definition does not depend on the choice of embedding $\overline{K}\hookrightarrow\mathbb{C}_v$ as $S$ is Galois stable subset of $\mathbb{P}^1(\overline{K})$. \\ 
\indent Let $f$ be a rational function defined over a global field $K$ of degree $\geq2$. Then the Call-Silverman  canonical height $\hat{h}_f:\mathbb{P}^1(\overline{K})\rightarrow [0,+\infty)$ is defined by $$\hat{h}_f(x)=\lim_{n\rightarrow\infty}\frac{1}{(\mathrm{deg}\,f)^n}h(f^n(x)).$$ 
 We record important properties of Weil height and dynamical canonical  height as follows.
\begin{prop}\label{prop: canoheightproperties} (\cite[$\S3$]{Si07}, \cite{CS93}) Let $f$, $h$ and $\hat{h}_f$ be as above. Then
\begin{enumerate}
\item[(i)]  $h(f(x))=(\mathrm{deg}\,f)h(x)+O(1)$.
\item[(ii)] $\hat{h}_f(x)=h(x)+O(1).$
\item[(iii)] $\hat{h}_f(f(x))=(\mathrm{deg}\,f)\hat{h}_f(x).$
\end{enumerate}
\end{prop}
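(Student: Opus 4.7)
The plan is to treat the three parts in the order (i), (iii), (ii), since (i) is where the real content lies, (iii) is immediate from the limit definition, and (ii) is a standard telescoping consequence of (i).

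For (i), I would write $f$ in homogeneous coordinates as $f = [F(X,Y):G(X,Y)]$ where $F, G$ are coprime homogeneous polynomials of degree $d = \deg f$ with coefficients in $K$. The upper bound $h(f(x)) \le d\, h(x) + O(1)$ is immediate on each place from the triangle inequality applied to $|F|_v, |G|_v$ in terms of $\max\{|X|_v, |Y|_v\}^d$. The lower bound $h(f(x)) \ge d\, h(x) - O(1)$ is the hard part: one uses that, since $F$ and $G$ have no common zero, the resultant $\mathrm{Res}(F,G) \in K^\times$ is nonzero, and by the Nullstellensatz there exist homogeneous polynomials $A_i, B_i$ of degree $d-1$ with $A_1 F + B_1 G = X^{2d-1}$ and $A_2 F + B_2 G = Y^{2d-1}$. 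Estimating the $v$-adic norms of these identities and summing over $v \in M_K$ using the product formula applied to the resultant yields the reverse bound up to $O(1)$, with the constant depending only on the coefficients of $F, G$ (and hence on $f$), not on $x$.

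For (iii), I would note directly from the definition
\[
\hat h_f(f(x)) = \lim_{n\to\infty}\frac{h(f^n(f(x)))}{d^n} = \lim_{n\to\infty}\frac{h(f^{n+1}(x))}{d^n} = d \lim_{n\to\infty}\frac{h(f^{n+1}(x))}{d^{n+1}} = d\,\hat h_f(x),
\]
so no work is needed beyond shifting the index.

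For (ii), I would exploit (i) via telescoping. Let $C > 0$ be the implicit constant from (i), so $|h(f(y)) - d\, h(y)| \le C$ for all $y$. Then for $n \ge 0$,
\[
\Bigl|\tfrac{h(f^{n+1}(x))}{d^{n+1}} - \tfrac{h(f^n(x))}{d^n}\Bigr| = \tfrac{1}{d^{n+1}}\bigl|h(f(f^n(x))) - d\, h(f^n(x))\bigr| \le \tfrac{C}{d^{n+1}}.
\]
Since $d \ge 2$, the partial sums are Cauchy, which simultaneously reproves convergence of the limit defining $\hat h_f$ and gives
\[
|\hat h_f(x) - h(x)| \le \sum_{n=0}^{\infty}\tfrac{C}{d^{n+1}} = \tfrac{C}{d-1},
\]
a bound independent of $x$, which is exactly (ii).

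The only substantive obstacle is the lower bound in (i); everything else is formal. The Nullstellensatz/resultant identity is the standard workhorse here, and once it is in hand the argument is essentially the same whether $K$ is a number field or a function field with a product formula, since all estimates are carried out place by place.
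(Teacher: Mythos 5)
The paper does not prove this proposition itself; it cites \cite[\S3]{Si07} and \cite{CS93}, and your argument is exactly the standard proof those references give. Parts (ii) and (iii) are verified cleanly by your telescoping and index-shift arguments. For (i), the one phrasing to tighten is the Nullstellensatz identity: with integer-coefficient combinations one gets $\mathrm{Res}(F,G)\cdot X^{2d-1} = A_1 F + B_1 G$ (and similarly for $Y^{2d-1}$), with $A_i,B_i$ homogeneous of degree $d-1$ whose coefficients are universal integer polynomials in the coefficients of $F,G$; the product formula applied to the nonzero scalar $\mathrm{Res}(F,G)$ is then what makes the place-by-place lower bounds sum to an $O(1)$, which you did mention, so the idea is in place even if the displayed identity drops the resultant factor. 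Your closing remark is also correct: the whole argument uses only the product formula and archimedean/non-archimedean triangle inequalities, so it applies verbatim to the global fields (number fields and product-formula function fields) that the paper works with.
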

Over a number field, dynamical height is used to detect preperiodic points in the following sense.
\begin{prop} \label{prep: heightpreper} (\cite[Theorem 3.22]{Si07}) Let $f(x)\in K(x)$ be a rational function of degree $\geq 2$ defined over a number field $K.$ Then $x\in \mathbb{P}^1(\overline{K})$ is $f$-preperiodic if and only if $\hat{h}_f(x)=0.$
\end{prop}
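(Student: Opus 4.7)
The plan is to prove the two implications separately, using only the three formal properties of $\hat h_f$ collected in Proposition~\ref{prop: canoheightproperties} together with Northcott's finiteness theorem for points of bounded Weil height and bounded degree over a number field.

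\textbf{Easy direction.} First I would show that preperiodicity forces $\hat h_f(x)=0$. Iterating the functional equation (iii) of Proposition~\ref{prop: canoheightproperties} gives $\hat h_f(f^n(x))=(\deg f)^n\,\hat h_f(x)$ for every $n\ge 0$. If $x$ is $f$-preperiodic, then the forward orbit $\{f^n(x):n\ge 0\}$ is a finite subset of $\mathbb{P}^1(\overline K)$; the Weil height $h$ takes finitely many (hence bounded) values on it, and by property (ii) the same holds for $\hat h_f$. Comparing this boundedness with the geometric growth $(\deg f)^n\,\hat h_f(x)$ and using $\deg f \ge 2$ forces $\hat h_f(x)=0$.

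\textbf{Harder direction.} Conversely, suppose $\hat h_f(x)=0$. By property (iii) applied iteratively, $\hat h_f(f^n(x))=0$ for all $n\ge 0$, and then property (ii) yields $h(f^n(x))=O(1)$ with an \emph{absolute} implied constant (independent of $n$). Since every iterate $f^n(x)$ lies in $\mathbb{P}^1(K(x))$, the degrees $[K(f^n(x)):K]$ are uniformly bounded by $[K(x):K]$. This is precisely the setup of Northcott's theorem, which states that for any real $B$ and integer $D$ the set $\{y\in\mathbb{P}^1(\overline K):[K(y):K]\le D,\ h(y)\le B\}$ is finite. Applying it to the sequence $\{f^n(x)\}$ shows that this sequence takes only finitely many distinct values, so $x$ is preperiodic.

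\textbf{Main obstacle.} The only non-formal input is Northcott's finiteness theorem; everything else is manipulation of the three listed identities. The subtle point is ensuring that the bound on $h(f^n(x))$ is genuinely uniform in $n$, which is exactly the reason one must pass through the canonical height rather than argue with the naive Weil height directly: for $h$ alone, property (i) only gives a functional equation up to an additive error that would accumulate under iteration and destroy the uniform bound needed for Northcott.
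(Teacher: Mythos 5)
Your proof is correct and is exactly the argument the paper invokes by citing Silverman's Theorem 3.22 (the paper itself gives no proof, only the citation). Both directions are as in the standard reference: the forward implication compares the geometric growth $\hat h_f(f^n(x))=(\deg f)^n\hat h_f(x)$ against the boundedness of $\hat h_f$ on a finite orbit, and the converse uses that $\hat h_f(x)=0$ gives a uniform bound $h(f^n(x))=O(1)$ (via property (ii)) together with $[K(f^n(x)):K]\le [K(x):K]$, so Northcott's theorem makes the orbit finite. Your closing remark correctly identifies why one must work with $\hat h_f$ rather than the naive height: the $O(1)$ errors in property (i) would accumulate under iteration and wreck the uniform bound.
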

The situation is different over function field. The dynamical height of a function defined on a function field (over an infinite field) fails to 
 properly detect preperiodic points. As an interesting example shown by Benedetto \cite{Be05}, we consider $f(x)=x^2\in K[x]$ defined over a function field $K=\overline{\mathbb{Q}}(T)$  of transcedence degree $1$ with constant field $\overline{\mathbb{Q}}$. Notice that all points in $\mathbb{P}^1(\overline{\mathbb{Q}})$ have dynamical height zero but $0,\infty$ and roots of unity in $\overline{\mathbb{Q}}$ are the only $f$-preperiodic points. In fact, $f$ is defined over the constant field of $K$. \\
 \indent Recall that a rational function $f(x)\in K(x)$ defined over a function field $K$ is called \textbf{isotrivial} if there exists a finite extension $K'/K$ and a M\"{o}bius transformation $\phi\in\mathrm{PGL}(2,K')$ such that $\phi^{-1}\circ f\circ \phi$ defined over the constant field of $K'.$ More related results, see \cite{Ba09}, \cite{Be05}, and \cite{PST09}.\\
 \indent After excluding maps with isotriviality, the dynamical height works as expected. The following result is due to Benedetto (for polynomial maps) and Baker (for rational functions) over $\mathbb{P}^1.$
\begin{thm}\label{thm: BakBenIsotrivia} (\cite{Ba09}, \cite{Be05}) Let $f(x)\in K(x)$ be a rational function of degree $\geq 2$ defined over a function field $K$. Suppose that $f$ is not isotrivial. Then $x\in \mathbb{P}^1(\overline{K})$ is $f$-preperiodic if and only if $\hat{h}_f(x)=0.$
\end{thm}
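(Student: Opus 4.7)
The plan is to treat the two directions separately, the forward one being formal and the converse requiring the non-isotriviality hypothesis through Berkovich-dynamical techniques.

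For the easy direction, suppose $x\in\mathbb{P}^1(\overline{K})$ is $f$-preperiodic. Then its forward orbit is finite, so by Proposition~\ref{prop: canoheightproperties}(iii) the sequence
\[
\hat{h}_f(f^n(x)) \;=\; (\deg f)^n\, \hat{h}_f(x)
\]
takes only finitely many values; since $\deg f \geq 2$, this forces $\hat{h}_f(x)=0$. This half uses neither non-isotriviality nor the function-field nature of $K$.

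For the converse, assume $\hat{h}_f(x)=0$. I would decompose the canonical height as a sum of local canonical heights
\[
\hat{h}_f(x) \;=\; \sum_{v\in M_K} N_v\, \hat{\lambda}_{f,v}(x),
\]
where $\hat{\lambda}_{f,v}$ is the Call--Silverman local Green's function at $v$, extended to the Berkovich projective line $\mathbb{P}^1_{\mathrm{Berk}}(\mathbb{C}_v)$. Under the standard normalization each term is non-negative, so the vanishing of $\hat{h}_f(x)$ forces $\hat{\lambda}_{f,v}(x)=0$ at every place $v\in M_K$; equivalently, $x$ lies in the filled Julia set at every place, and by the functional equation so does every $f^n(x)$.

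Suppose for contradiction that the forward orbit of $x$ is infinite. The Galois-orbit averages then yield an infinite sequence of distinct points of canonical height zero, and the Baker--Rumely equidistribution theorem forces these orbits to equidistribute, at each place $v$, toward the equilibrium measure $\mu_{f,v}$ on $\mathbb{P}^1_{\mathrm{Berk}}(\mathbb{C}_v)$. The main obstacle --- the genuine technical content attributed to Baker and Benedetto --- is the measure-theoretic characterization of isotriviality over a function field: $f$ is isotrivial if and only if, for every place $v$, the measure $\mu_{f,v}$ is concentrated at a single type-II point of $\mathbb{P}^1_{\mathrm{Berk}}(\mathbb{C}_v)$ (so that the Berkovich Julia set degenerates). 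Non-isotriviality thus supplies a distinguished place $v_0$ at which $\mu_{f,v_0}$ has non-trivial support; pairing a suitable continuous potential against the equidistribution statement then contradicts $\hat{\lambda}_{f,v_0}(f^n(x))\equiv 0$ along the orbit. This refutes the infinite-orbit hypothesis, yielding preperiodicity. The subtle point that would require the most care is importing the isotriviality-versus-support dichotomy from \cite{Ba09, Be05}, which itself rests on delicate potential theory on the Berkovich line.
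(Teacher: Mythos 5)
The paper states this result as a citation to \cite{Ba09} and \cite{Be05} and does not reproduce a proof, so the comparison is with those sources. Your easy direction (preperiodic $\Rightarrow$ $\hat h_f=0$) is correct and standard. For the converse, your equidistribution route differs genuinely from the cited proofs: Baker \cite{Ba09} argues through capacity theory on the Berkovich line (the Hsia kernel and transfinite diameters of homogeneous filled Julia sets), establishing a Northcott-type finiteness directly, and Benedetto \cite{Be05} uses explicit coefficient estimates for polynomials; neither invokes equidistribution of small points.

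There is, however, a concrete gap in your argument for general rational maps. You decompose $\hat h_f(x)=\sum_v N_v\hat\lambda_{f,v}(x)$ with nonnegative local terms and read $\hat\lambda_{f,v}(x)=0$ as ``$x$ lies in the $v$-adic filled Julia set.'' That picture is polynomial-specific: for a polynomial the point at infinity is a totally invariant fixed point, and the local canonical height relative to $\infty$ is genuinely nonnegative and vanishes precisely on the filled Julia set. For a rational map there is no distinguished fixed divisor, the local canonical heights need not be nonnegative, and there is no filled Julia set. The correct device here is the two-variable Arakelov--Green's function $G_{f,v}(x,y)$ of Section~2.3 (as used in Proposition~\ref{prop: shareheight}), which is only normalized to have zero integral against $\mu_{f,v}\times\mu_{f,v}$ and takes both signs, so the ``nonnegativity forces each local term to vanish'' step does not go through as stated. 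Separately, the last step is too loose: the sharp version of your idea is that the entire orbit $\{f^n(x)\}$ lies in $\mathbb P^1(L)$ for a fixed finite extension $L/K$, so the Galois-orbit measures over $L$ are supported on at most $[L:K]$ points, and a weak limit of such measures has support of at most that size; equidistribution would then force $\mu_{f,v}$ to be a finite atomic measure, hence a single type-II point mass, at every $v$, i.e., potential good reduction everywhere, which by Petsche--Szpiro--Tepper \cite{PST09} is exactly isotriviality. You should cite \cite{PST09} explicitly and verify that the equidistribution statement you invoke over the function field is proved independently of the Northcott-type finiteness you are trying to establish, to avoid a hidden circularity.
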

\subsection{Arakelov-Green's function.} The dynamical height associated to rational functions can be decomposed in terms of Arakelov-Green's function. We follow the construction in Baker-Rumely \cite[$\S10.3$]{BR10}.
Suppose that $f\in K(x)$ is a rational function of degree  $\geq2$ defined over a global field $K$. The $v$-adic Arakelov-Green's function of $f$ is a function $$G_{f,v}: \mathbb{P}^1_{\mathrm{Berk},v}\times\mathbb{P}^1_{\mathrm{Berk},v}\rightarrow\mathbb{R}\cup\{\infty\}$$ which is symmetric,  finite and continuous off the diagonal in $\mathbb{P}^1(\mathbb{C}_v)\times \mathbb{P}^1(\mathbb{C}_v)$. Here $\mathbb{P}^1_{\mathrm{Berk},v}$ is the Berkovich projective line, we refer the reader to \cite{Be19} and \cite{BR10} for detailed exposition. For each $y\in\mathbb{P}^1_{\mathrm{Berk},v}$, the function $G_{f,v}(x,y)$  is bounded differential variation on $\mathbb{P}^1_{\mathrm{Berk},v}$ and it  satisfies the differential equation 
\begin{equation}\Delta_x G_{f,v}(x,y)=\mu_{f,v}(x)-\delta_y(x)\label{eq:laplacian}\end{equation}
where $\mu_{f,v}$ is the equilibrium measure characterized by $f^*(\mu_{f,v})=(\mathrm{deg}\,f)\mu_{f,v}$ and $G_{f,v}$ is normalized so that $$
\iint_{\mathbb{P}^1_{\mathrm{Berk},v}\times \mathbb{P}^1_{\mathrm{Berk},v}}G_{f,v}(x,y)d\mu_{f,v}(x)d\mu_{f,v}(y)=0.$$ Our Laplacian is the negative of the one introduced by Baker-Rumely \cite[$\S3.5$]{BR06}. For any $x,y\in \mathbb{P}^1(\overline{K})$ such that $x\neq y$, we have
$$\hat{h}_f(x)+\hat{h}_f(y)=\sum_{v\in M_K}N_vG_{f,v}(x,y)$$ where the positive number $N_v$ is the same normalization as appeared in the product formula.
\subsection{Results from Diophantine geometry.} 
Here we collect some Diophantine geometry results that will be used in the proof.
\begin{thm}\cite[Theorem 6.1.3]{EG15}\label{thm: S-unit}
    Let $K$ be a field of characteristic $0$, let $n \geq 2$, let $a_1, \dots,a_n \in K^*$ and let $\Gamma$ be a subgroup of $(K^*)^n$ of finite rank $r$. Then the number of non-degenerate solutions of
    $$ a_1 x_1 + a_2 x_2 + \dots + a_n x_n = 1$$
    in $(x_1, x_2, \dots, x_n) \in \Gamma$ is finite.
\end{thm}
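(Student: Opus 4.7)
The plan is to deduce this finiteness from the Schmidt Subspace Theorem (in the $S$-adic form due to Schlickewei and Evertse), combined with an induction on $n$. First I would reduce to an $S$-unit equation over a number field. Since $\Gamma$ has finite rank $r$, modulo torsion it sits inside a finitely generated subgroup $\Gamma_0 \subset (K^*)^n$; together with $a_1,\ldots,a_n$, this places all the data inside a finitely generated subfield $K_0$ of $K$. When $K_0$ is a number field, one enlarges a finite set of places $S$ of $K_0$ so that the supports of the $a_i$ and of all generators of $\Gamma_0$ are contained in $S$, after which every solution lies in $(\O_{K_0,S}^*)^n$. When $K_0$ has positive transcendence degree over $\Q$, a specialization argument reduces to the number-field case (alternatively, one invokes the function-field Subspace Theorem of Evertse--Zannier directly).

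Next I would apply the Subspace Theorem. With $y=(1,x_1,\ldots,x_n)\in K_0^{n+1}$, the $n+2$ linear forms consisting of the coordinate projections together with $L(y)=a_1y_1+\cdots+a_ny_n-y_0$ are linearly independent. The relation $L(y)=0$ on solutions, combined with the fact that every coordinate is an $S$-unit, lets one bound the double product over places and forms by a negative power of the projective height $H(y)$; the Subspace Theorem then confines all but finitely many projective solutions $[y]$ to a finite union of proper linear subspaces of $\P^n_{K_0}$.

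To finish I would induct on $n$. The base case $n=2$ is the classical two-variable unit equation $a_1x_1+a_2x_2=1$, whose finiteness for $(x_1,x_2)$ in a finite-rank group is a theorem of Mahler and Evertse (and is itself the $n=2$ instance of the Subspace Theorem). For $n\geq 3$, each exceptional subspace imposes a nontrivial linear relation among the $x_i$; the \emph{non-degeneracy} hypothesis forbids any proper subsum $\sum_{i\in I}a_ix_i$ from vanishing, so on each such subspace the equation reduces, after eliminating one coordinate, to a shorter non-degenerate unit equation in a finite-rank subgroup of $(K^*)^{n-1}$, to which the inductive hypothesis applies. The main obstacle is the Subspace Theorem itself, which is deep and non-effective; a secondary technical point is verifying that non-degeneracy is inherited by the induced equation on each exceptional subspace, and (in the case of positive transcendence degree) that the chosen specialization preserves both finiteness and non-degeneracy.
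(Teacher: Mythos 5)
The paper does not prove this theorem at all; it is quoted verbatim as Theorem~6.1.3 of Evertse--Gy\H{o}ry \cite{EG15} and used as a black box, so there is no in-paper argument to compare your proposal against. That said, your sketch is a fair high-level account of how the cited result is actually established in the literature and in \cite{EG15}: reduce to a finitely generated subgroup, then (in the positive-transcendence-degree case) to a number field by a specialization argument, apply a $p$-adic/quantitative Subspace Theorem to place all but finitely many solutions in finitely many proper linear subspaces of $\P^n$, and close by induction on $n$ using non-degeneracy. The point where your outline is thinnest is precisely the one you flag at the end: passing from ``the solution lies in a proper subspace'' to ``a shorter non-degenerate unit equation'' is not a matter of naively eliminating one coordinate. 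One must work with the exceptional linear relation together with the original equation, choose a suitable vanishing or non-vanishing subsum, and partition the index set carefully so that the induced equation really is non-degenerate and really does live in a finite-rank subgroup of a smaller torus; this combinatorial bookkeeping is where much of the work in \cite[Ch.~6]{EG15} lies, and a complete proof would need to spell it out. Since the paper only invokes the theorem as a known result, filling in that detail is not required for the paper's purposes, but it would be required to make your sketch into a self-contained proof.
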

\begin{rmk}
    We call a solution to the equation $a_1 x_1 + a_2 x_2 + \dots + a_n x_n = 1$ like above non-degenerate if 
    $$ \sum_{i \in I}a_i x_i \neq 0$$
    for any proper subset $I \subset \{1,2, \dots, n\}$.
\end{rmk}

\subsection{The Ping-pong Lemma}
\begin{lem}\label{lem: ping-pong}
 Let $G$ be a group of maps under compositions. Let $S = \langle f_1, \dots, f_r \rangle$, $r \geq 2$ and $f_i \in G$, for $i \in \{1,2, \dots, r\}$, be a semigroup of $G$. If there exists a collection of non-empty mutually non-intersecting sets $I_1, \dots, I_r$ such that $f_i(\bigcup^r_{j = 1}I_j)= I_i$ for all $1 \leq i \leq r$, then $S$ is a free semigroup of rank $r$
with free basis $f_1,\dots,f_r$.   
\end{lem}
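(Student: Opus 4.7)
The plan is to track the image of the ambient set $U := \bigcup_{j=1}^{r} I_j$ under an arbitrary word in $f_1, \ldots, f_r$, and to read off the leftmost generator from the location of that image.

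\emph{First step.} By induction on $s$, I claim that for any indices $i_1, \ldots, i_s \in \{1, \ldots, r\}$ the composition $w = f_{i_1} \circ \cdots \circ f_{i_s}$ satisfies $\emptyset \ne w(U) \subseteq I_{i_1}$. The case $s = 1$ is exactly the hypothesis. For $s \ge 2$, applying the inductive hypothesis to the length-$(s-1)$ tail $v = f_{i_2} \circ \cdots \circ f_{i_s}$ gives $\emptyset \ne v(U) \subseteq I_{i_2} \subseteq U$, and then $w(U) = f_{i_1}(v(U)) \subseteq f_{i_1}(U) = I_{i_1}$, which is non-empty.

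\emph{Second step.} Now suppose $w = f_{i_1} \circ \cdots \circ f_{i_s}$ and $w' = f_{j_1} \circ \cdots \circ f_{j_t}$ represent the same element of $G$; the goal is to show $s = t$ and $i_k = j_k$ for every $k$. The first step gives $w(U) \subseteq I_{i_1}$ and $w'(U) \subseteq I_{j_1}$, both non-empty, so pairwise disjointness of the $I_k$ forces $i_1 = j_1$. Since $G$ is a group, $f_{i_1}$ is invertible and we may left-cancel to obtain an equality between words of shorter total length. Iterating this procedure, we continue until the two sides simultaneously become empty (in which case $w = w'$ as words and we are done), or until one side becomes empty while the other still has some length $\ell \ge 1$ remaining.

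\emph{Third step.} In the latter case we have $\id = f_{k_1} \circ \cdots \circ f_{k_\ell}$ in $G$, and evaluating both sides on $U$ yields $U = (f_{k_1} \circ \cdots \circ f_{k_\ell})(U) \subseteq I_{k_1}$ by the first step; but $r \ge 2$ together with the non-emptiness and disjointness of the $I_j$ makes $I_{k_1}$ a \emph{proper} subset of $U$, a contradiction. Hence the cancellation terminates with both sides empty, giving $s = t$ and $i_k = j_k$ for all $k$, which is exactly the statement that $S$ is free of rank $r$ on $f_1, \ldots, f_r$. The only delicate ingredient is the left-cancellation step, which relies crucially on $G$ being a group rather than a mere semigroup; otherwise the argument is a standard ping-pong bookkeeping exercise.
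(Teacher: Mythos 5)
Your proof is correct and is a complete write-out of the standard ping-pong argument: the induction showing $\emptyset \ne w(U) \subseteq I_{i_1}$ is exactly the ``read off the leftmost letter'' step, the left-cancellation is licensed by $G$ being a group, and the third step correctly rules out a nontrivial word equalling the identity using $r \ge 2$ together with disjointness and non-emptiness of the $I_j$. The paper itself does not spell out a proof --- it simply cites the Ping-Pong Lemma from \cite{KT21} and notes that the argument there does not use the specialization to affine maps on $\R$ --- so your proposal is precisely the standard argument that the reference invokes, just written out in full.
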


\begin{proof}
    This is basically the Ping-pong Lemma stated in \cite[The Ping-Pong Lemma]{KT21}. The only difference is that it is stated for $G$ being the group of affine linear maps on $\R$. However the proof doesn't rely on this assumption.
\end{proof}
\subsection{Asymptotic notation} 
We clarify the asymptotic notation here:
\begin{defn}
    Consider two functions $f(n)$ and $g(n)$ with $n \in \N$ and take values in $\C$. We defined the asymptotic notation when $n$ goes large as follows:
    \begin{enumerate}
        \item We write $f(n) \sim g(n)$ if there exists a non-zero constant $k \in \C$ such that 
    $$ \lim_{n \to \infty}f(n)/g(n) = k;$$
    \item We write $f(n) = o(g(n))$ if 
    $$ \lim_{n \to \infty} f(n)/g(n) = 0;$$
    \item We write $f(n) = O(g(n))$ if 
    $$ \limsup_{n \to \infty} f(n)/g(n) < \infty.$$
    \end{enumerate}
    Furthermore, for two functions $f(Z)$ and $g(Z)$ take values in $\C$ with $Z \in \C$. We define the asymptotic notation when $Z$ is close to $0$ similarly as above but taking limits with respect to $Z \to 0$. 
    
\end{defn}
\section{Proof of Theorem \ref{thm: mainlinear}}

\begin{lem} \label{lem: reduce-to-sharingfixpoint}
    Let $f(X)$, $g(X)$ be automorphisms on $\P^1$ defined over $\C$ and $c(X)$ be a rational function defined over $\C$. If there are infinitely many $n \in \N$ such that 
    \begin{equation} \label{eq: targeteq1}
        c(X) = f^n(X) = g^n(X)
    \end{equation}
    has solutions, then $f$ and $g$ share a common fixed point unless $f(X)$, $g(X)$ are simultaneously conjugated to $\alpha X + \beta$, $X/(\gamma X + \delta)$ respectively, with some $\alpha, \delta, \beta ,\gamma \in \C^*$, such that at least one of the following conditions is satisfied:
    \begin{enumerate}
        \item at least one of  $\alpha$, $\delta$ is root of unity other than $1$;
        \item one of $\alpha/\delta$ and $\alpha \delta$ is a root of unity.
    \end{enumerate}
   
\end{lem}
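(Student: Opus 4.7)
I would prove the contrapositive: assume $f, g \in \Aut(\P^1_\C)$ have no common fixed point and that there are infinitely many $n \geq 1$ with a solution to $f^n(X) = g^n(X) = c(X)$; the goal is to force $(f, g)$ to be simultaneously conjugate to the exceptional form $(\alpha X + \beta, X/(\gamma X + \delta))$ with $\alpha, \beta, \gamma, \delta \in \C^*$ and with the specified root-of-unity condition on the multipliers.

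The first step is a simultaneous conjugation. Since every element of $\Aut(\P^1_\C)$ has a fixed point, conjugate so that $f$ fixes $\infty$, giving $f(X) = \alpha X + \beta$ with $\alpha \in \C^*$. The no-common-fixed-point hypothesis forces $g(\infty) \neq \infty$, so $g$ has some finite fixed point $q$; conjugating further by $X \mapsto X - q$ preserves the affine form of $f$ and makes $g$ fix $0$, hence $g(X) = X/(\gamma X + \delta)$ with $\gamma, \delta \in \C^*$ (nonzero because $g$ is an automorphism not fixing $\infty$). In these coordinates the constant term of $f$ must be nonzero, else $0$ would be a common fixed point. This gives the simultaneous conjugation to the target form with all four parameters in $\C^*$.

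The remainder, and substantive part, consists in showing that the infinite-$n$ hypothesis forces one of the stated root-of-unity conditions on $\alpha$, $\delta$ or $\alpha/\delta$. After descending to a number field $K \supset \Q(\alpha, \beta, \gamma, \delta)$ containing the coefficients of $c = A/B$ (with $A, B$ coprime), I would assume for contradiction that each of $\alpha$ and $\delta$ is either $1$ or is not a root of unity, and that $\alpha/\delta$ is not a root of unity; this automatically excludes the joint case $\alpha = \delta = 1$. Two essentially distinct sub-cases remain: (a) both $\alpha$ and $\delta$ are non-roots-of-unity, and (b) exactly one of them equals $1$. Sub-case (a) is precisely the setting of Lemma \ref{lem: degC}, which yields $\deg A = \deg B + 1$. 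I would then extract a second, complementary constraint from the relation $g^n(X_n) = c(X_n)$ by an analogous $v$-adic analysis (with Lemma \ref{lem: HT17-lem14} used in the complementary direction when needed), and combine the two via the $S$-unit theorem (Theorem \ref{thm: S-unit}), applied to the finitely generated multiplicative group determined by $\alpha$, $\delta$, and the leading coefficients of $A$ and $B$, to reduce the number of admissible $n$ to a finite set. Sub-case (b) is symmetric under the involution $X \mapsto 1/X$ (which swaps the two exceptional forms), and reduces to a direct $v$-adic comparison because one side of $f^n(X_n) = g^n(X_n)$ grows polynomially in $n$ while the other grows exponentially at suitably chosen non-archimedean places.

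The main obstacle is sub-case (a). Lemma \ref{lem: degC} alone records only a degree equality and is not contradictory in isolation, so the argument hinges on producing a second, independent algebraic relation between $X_n$ and the pair $(\alpha^n, \delta^n)$ and then invoking the multiplicative structure of $\langle \alpha, \delta \rangle$. A further delicate point is that $\alpha$ and $\delta$ may fail to be multiplicatively independent even when $\alpha/\delta$ is not a root of unity (for example when $\alpha$ is a rational power of $\delta$), so a secondary case split and a refined $v$-adic analysis mirroring the proof of Lemma \ref{lem: degC} will be required to cover all possibilities.
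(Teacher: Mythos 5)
Your high-level strategy coincides with the paper's: reduce to algebraic coefficients, put $(f,g)$ in the normal form $(\alpha X+\beta,\ X/(\gamma X+\delta))$, split on whether $\alpha,\delta,\alpha/\delta$ are roots of unity, and use Lemma \ref{lem: degC} together with the $S$-unit theorem. But several of the steps are stated too loosely to carry the argument, and at least one is genuinely incomplete as sketched.

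First, the phrase ``after descending to a number field'' is doing a lot of hidden work. If the coefficients of $f,g,c$ are transcendental, there is no number field to descend to; the paper handles this via a Call--Silverman specialization argument and induction on transcendence degree (Lemma \ref{lem: lemReduce-to-algebraic}), carefully choosing specializations of large height so that the root-of-unity and nonvanishing hypotheses on $\alpha,\delta,\alpha/\delta,\beta,\gamma$ survive. Your proposal should at least indicate this mechanism, since the statement is over $\C$.

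Second, in your sub-case (b) (exactly one of $\alpha,\delta$ equals $1$) you claim the conclusion ``reduces to a direct $v$-adic comparison\dots at suitably chosen non-archimedean places.'' That is not what happens. The paper's Lemma \ref{lem: lemCase1} splits on the archimedean size of $\alpha$: $|\alpha|>1$ and $|\alpha|<1$ are handled by asymptotic growth comparisons of $X_n$, and the hard remaining case $|\alpha|=1$ (with $\alpha$ not a root of unity) cannot be resolved by a place-by-place growth comparison at all, because $\alpha$ is essentially a unit everywhere. That case is disposed of by interpreting a relation $F(n,\alpha^n)=0$ as a linear recurrence and applying the arithmetic-progression structure of its zero set (cf. \cite{BGT}), then extracting a bounded-away-from-zero subsequence of $1-\alpha^n$. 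You would need an argument of this type; a $v$-adic comparison does not close this case.

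Third, your sub-case (a) plan is structured differently from the paper's and leaves the hardest step unaddressed. The paper's use of the $S$-unit theorem requires first showing that the formal identity obtained by replacing $(\alpha^n,\delta^n)$ by free variables $(Z,W)$ is \emph{not} an identity; this is done by specializing $W=0$ and analyzing the Laurent expansion of $X^\pm(Z,0)$, with Lemma \ref{lem: degC} used there precisely to force a negative $Z$-valuation on one side. Your idea of extracting a second $v$-adic constraint from $g^n(X_n)=c(X_n)$ and feeding two constraints into the $S$-unit theorem is not the same thing and, as written, does not obviously produce the nontrivial $S$-unit equation you need. Moreover the $S$-unit theorem does not ``reduce the number of admissible $n$ to a finite set'' directly; what it yields (after the paper's degeneracy bookkeeping) is that $\alpha$ and $\delta$ are multiplicatively dependent. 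The multiplicatively dependent case is then the substantive part of the proof: writing $\delta=\xi\alpha^k$, one must analyze the Laurent expansions of $X^\pm_i(Z)$ as $Z\to 0$ separately for $k>1$, $0<k<1$, and $k<0$, using both the relation with $f^n$ and the relation with $g^n$, and the outcome in the $k<0$ branch is not a contradiction at all but the identity $\gamma/(1-\delta)=\beta/(1-\alpha)$, i.e., a shared fixed point; the branch $k=1$ is exactly the excluded root-of-unity case. Your proposal labels this ``a secondary case split and a refined $v$-adic analysis,'' which is too thin to count as a plan for the core of the proof. Flesh out how you recover either a shared fixed point or the excluded conjugacy form from multiplicative dependence; without that, the argument is incomplete.
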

\begin{rmk} We record important notes in order:
  \begin{enumerate}\item[(i)]  For the rest of the discussion, we will use the notation $c(X) = A(X) /B(X)$ for a pair of coprime $A(X), B(X) \in \C[X]$. 
  \item[(ii)] Recall that the field of  Puiseux series with complex coefficients $\mathbb{C}$ is defined as the union 
  $$\mathcal{P}(Z):=\bigcup_{n\geq 1}\mathbb{C}(\!(Z^{1/n})\!)=\left\{\sum_{m=m_0}^{\infty}c_mZ^{m/n}: c_m\in\mathbb{C}, m_0\in\mathbb{Z}, n\geq 1 \right\}$$ of the fields of Laurent series in (indeterminate) $Z^{1/n}$. Then we define $\val_Z$ on $\mathcal{P}(Z)\backslash \{0\}$ by letting $$\val_Z(F)=\min\left\{\frac{m}{n}: c_m\neq0\right\}$$ for $F\in \mathcal{P}(Z)\backslash\{0\}.$ For example,  $\val_Z(Z^3-2Z^{5})=3.$
  
\end{enumerate}
\end{rmk}
\begin{proof}
     Suppose $f$ and $g$ don't share a fixed point, otherwise we have nothing to prove. After conjugation, we assume that 
    $$ f(X) = \alpha X + \beta, $$
    $$ g(X) = \frac{X}{\gamma X + \delta}.$$ Notice that if $\gamma$ or $\beta$ is zero, then certainly $f(X)$ and $g(X)$ share a common fixed point. So we assume $\beta$ ,$\gamma$ are non-zero. 
    We also have that $\alpha$ and $\delta$ are non-zero as otherwise it will result in a constant map. We break the discussion into two cases depending on $\alpha$ and $\delta$. \newline

    \noindent\textbf{Case \RNum{1}.} Suppose $\alpha$ is not a root of unity and $\delta = 1$. 
    \begin{lem} \label{lem: lemCase1}
     There are only finitely many $n \in \N$ such that   $$
        c(X) = f^n(X) = g^n(X)
    $$ 
    has solutions.
    \end{lem}
    The same result for $\alpha = 1$ and $\delta$ is not a root of unity holds. We just need to further conjugate $c(X)$, $f(X)$ and $g(X)$ by $1/X$ to swap the role of $\alpha$ and $\delta$. Then the same argument will give the result. This concludes these cases.
   \newline

    \noindent\textbf{Case \RNum{2}.} Now, suppose both $\alpha$ and $\delta$ are not roots of unity and neither $\alpha/\delta$ nor $\alpha \delta$ is a root of unity.
    Then let 
    $$X^\pm_n = \left(-t_n  \pm \sqrt{t^2_n - 4 \alpha^n \delta^n \frac{(1- \alpha^n)(1- \delta^n  )}{(1 - \alpha)(1-\delta)}\beta \gamma}\right)/\left(2\alpha^n \frac{1 - \delta^n}{1 - \delta}\gamma\right),$$
    be the solution to the $f^n(X) = g^n(X)$, where 
    $$ t_n = \frac{(1-\alpha^n)(1 - \delta^n)}{(1- \alpha)(1- \delta)}\beta \gamma + \alpha^n \delta^n - 1.$$
    
    We break the proof into steps: 
    \begin{itemize}
        \item \textbf{Step \RNum{1}}: we will show that $c(X) = f^n(X) = g^n(X)$ has solution for infinitely many $n$ will pose a restriction on $\alpha$ and $\delta$: there exists a non-trivial polynomial in two variables $F(X,Y)$ such that $F(\alpha^n, \delta^n) = 0$ holds for infinitely many $n \in \N$. 
        \item  \textbf{Step \RNum{2}}: we use the S-unit Theorem to argue that this implies $\alpha$ and $\delta$ are multiplicatively dependent. 
        \item \textbf{Step \RNum{3}}: we deduce that then $c(X) = f^n(X) = g^n(X)$  have solution for infinitely many $n$ if $f(X)$ and $g(X)$ share a common fixed point.
    \end{itemize}

   \noindent\textbf{Step \RNum{1}.} Suppose there are infinitely many $n \in \N$ such that either
    \begin{equation}\label{eq: case4nontrivialeq1}
        c(X^-_n) = f^n(X^-_n)
    \end{equation}
    or 
   \begin{equation}\label{eq: case4nontrivialeq2}
       c(X^+_n) = f^n(X^+_n).
   \end{equation} 
    
    We claim that the following equation, which is obtained from above by replacing $\alpha^n$ with a variable $Z$ and $\delta^n$ with a variable $W$,
    \begin{equation}\label{eq: rationalcase4eq1}
        c(X^\pm(Z,W))  = ZX^\pm(Z,W)  + \frac{1 - Z}{1 - \alpha}\beta
    \end{equation}
    
    doesn't hold constantly for every $Z,W \in \C$, where 
    \begin{equation}\label{eq: expression-xpm}
        X^\pm(Z,W) = \left(-t(Z,W)  \pm \sqrt{t(Z,W)^2 - 4 Z W \frac{(1- Z)(1- W)}{(1 - \alpha)(1-\delta)}\beta \gamma}\right)/\left(2Z \frac{1 - W}{1 - \delta}\gamma\right),
    \end{equation} 
    $$ t(Z,W) = \frac{(1-Z)(1 - W)}{(1- \alpha)(1- \delta)}\beta \gamma + Z W - 1.$$
    Therefore, we get the desired result of Step \RNum{1} as Equation (\ref{eq: rationalcase4eq1}) will give a non-trivial restriction on $\alpha^n$ and $\delta^n$.

    To prove the claim, taking a specialization at $W = 0$, we have 
    $$ \restr{X^+(Z, W)}{W = 0}  = \frac{-t(Z,0) + t(Z,0)}{2Z\gamma/(1 - \delta)}= 0,$$
    and Equation (\ref{eq: rationalcase4eq1}) implies
    $$ c(0) = \frac{1 - Z}{1 - \alpha} \beta$$
    which only holds for finitely many values of $Z$. So, 
    $$c(X^+(Z,W))  = Z(X^+(Z,W))  + \frac{1 - Z}{1 - \alpha}\beta$$
    doesn't hold constantly for every value of $Z$ and $W$.
    
    On the other hand, we have 
    $$  X^-_0(Z) \vcentcolon = \restr{ X^-(Z, W)}{W = 0} = \frac{-2t(Z,0)}{2Z\gamma/(1 - \delta)} = \frac{\beta}{1 - \alpha} + \left(\frac{1 - \delta}{\gamma} - \frac{\beta}{1 - \alpha}\right)\frac{1}{Z}.$$
    If 
    $$ \frac{1 - \delta}{\gamma} = \frac{\beta}{1 - \alpha}$$
    then
    $f(X)$ and $g(X)$ share a common fixed point and we are done. So, we assume 
    $$ \frac{1 - \delta}{\gamma} \neq \frac{\beta}{1 - \alpha}.$$
    Then Equation (\ref{eq: rationalcase4eq1}) implies
    \begin{equation}\label{eq: rationalcase4contra1}
        c(X^-_0(Z))= A(X^-_0(Z))/B(X^-_0(Z))  = ZX^-_0(Z) + \frac{1 - Z}{1 - \alpha}\beta  = \frac{1- \delta}{\gamma}.
    \end{equation} 
    This implies that there are only finitely many $Z \in \C$ satisfies the condition.
    
    Thus $$ c(X^-(Z,W)) = f^n(X^-(Z,W))$$
    doesn't hold constantly.
    \newline
    
    \noindent\textbf{Step \RNum{2}.} Now, either 
    $$c(X^{+}(Z,W)) = Z(X^+(Z,W))  + \frac{1 - Z}{1 - \alpha}\beta $$
    or 
    $$ c(X^-(Z,W)) = Z(X^-(Z,W))  + \frac{1 - Z}{1 - \alpha}\beta$$
    is a non-trivial equation on variables $Z$ and $W$ with $(Z,W) = (\alpha^n, \delta^n)$ as solutions for infinitely many $n \in \N$.
    Thus, this gives a polynomial, by clearing the denominators and possible square roots on both sides of the equation, 
    \begin{equation}\label{eq: applyS-unitequationrational}
        \sum_{1 \leq i \leq k_1, 1 \leq j \leq k_2} c_{i,j}\alpha^{in}\delta^{jn} = 0
    \end{equation}
    for infinitely many $n \in \N$. 
    Then, we want to apply the S-unit theorem (Theorem \ref{thm: S-unit}) to show that $\alpha$ and $\delta$ are multiplicatively dependent. Let $G \subset \C^*$ be the finitely generated subgroup generated by $\alpha$ and $\delta$. Then Equation (\ref{eq: applyS-unitequationrational}) can be rewritten as 
    \begin{equation}\label{eq: rewrittenS-unit}
        \sum_{1 \leq i \leq m}c_{i}W_i = 0,
    \end{equation}
    for some positive integer $m$, $c_i \in \C^*$ and $W_i \in G$ for each $i = 1, 2, \dots, m$, which has solutions $$(W_1, W_2, \dots, W_m)=(\alpha^{s_1n}\delta^{t_1n}, \alpha^{s_2n}\delta^{t_2n}, \dots, \alpha^{s_mn}\delta^{t_mn}) \in G^m,$$ for infinitely many $n \in \N$, where $(s_i, t_i)$'s are distinct pairs of integers. The S-unit theorem \ref{thm: S-unit} says that there are only finitely many non-degenerate solutions to Equation (\ref{eq: rewrittenS-unit}). After an easy induction argument on the numbers of terms involved in Equation (\ref{eq: applyS-unitequationrational}) we can get that there exists infinitely many $n \in \N$ such that 
    $$ c_{i_1} \alpha^{s_{i_1}n}\delta^{t_{i_1}n} + c_{i_2}\alpha^{s_{i_2}n}\delta^{t_{i_2}n} = 0,$$
    where $i_1$ and $i_2$ are distinct and in $\{1,2, \dots, m\}$. Since $\alpha$ and $\delta$ are not zero and not roots of unity, this is only possible if $s_{i_1} \neq s_{i_2}$ and $t_{i_1} \neq t_{i_2}$ and $c_{i_1}/c_{i_2}$ is a root of unity. This implies that $\alpha$ and $\delta$ are multiplicative dependent. 
    In another word, there exist non-zero integers $k_1$ and $k_2$ such that $\alpha^{k_1} = \delta^{k_2}$.
    \newline
     
    \noindent\textbf{Step \RNum{3}.} Let $k = k_1/k_2 \in \Q^*$ such that $\delta= \xi \alpha^k$, where $\xi$ is a root of unity of order $k_2 \in \N$ and $\alpha^k$ is understood as a particular choice of $k_2$-th root of $\alpha^{k_1}$. Then our assumption implies that there exists an $i \in \{1,2, \dots, k_2\}$ such that either the system of equations 
    \begin{equation}\label{eq: rationalcaseformal1}
        A(X^+(Z, \xi^i Z^k)) = B(X^+(Z,\xi^iZ^k))\left(Z X^{+}(Z,\xi^iZ^k) + \frac{1 - Z}{ 1- \alpha}\beta\right),
    \end{equation}
    \begin{equation}\label{eq: rationalcaseformal3}
       A(X^+(Z, \xi^i Z^k))\left(\frac{1- \xi^i Z^k}{1- \delta}\gamma X^+(Z, \xi^i Z^k) + \xi^iZ^k\right) = B(X^+(Z, \xi^i Z^k))X^+(Z, \xi^i Z^k)
    \end{equation}
    or the system of equations
   \begin{equation} \label{eq: rationalcaseformal2}
       A(X^-(Z, \xi^i Z^k)) = B(X^-(Z,\xi^iZ^k))\left(Z X^{-}(Z,\xi^iZ^k) + \frac{1 - Z}{ 1- \alpha}\beta\right),
   \end{equation}
   \begin{equation}\label{eq: rationalcaseformal4}
       A(X^-(Z, \xi^i Z^k))\left(\frac{1- \xi^i Z^k}{1- \delta}\gamma X^-(Z, \xi^i Z^k) + \xi^i Z^k\right) = B(X^-(Z, \xi^i Z^k))X^-(Z, \xi^i Z^k)
   \end{equation}
    has infinitely many values of $Z$, given by $Z = \alpha^{nk_2 + i}$ for infinitely many $n \in \N$, as solutions. This cannot happen unless the system of equations holds constantly for every possible value of $Z$ as it only has one variable. From now on, we denote 
    $$ X^\pm_i(Z) \vcentcolon = X^\pm(Z, \xi^i Z^k).$$
    Notice that 
    \begin{equation} \label{eq: rationalproductsofroots1}
      X^+_i (Z) X^-_i(Z) = \frac{\xi^iZ^k(1 - Z)(1 - \delta)\beta}{Z(1-\xi^i Z^k)(1 - \alpha)\gamma},
    \end{equation}
    \begin{equation}
        X^+_i(Z) + X^-_i(Z) = -2t(Z, \xi^i Z^k)\Big/ \left(2Z \frac{1 - \xi^i Z^k}{1-\delta}\gamma\right),
    \end{equation} 
    for every $i \in \{1,2, \dots, k_2\}$.

    We also notice that, from the Equation (\ref{eq: expression-xpm}), if $k > 0$, we have 
    \begin{align}\label{eq: exk<0-}
        & X^-_i(Z) = \left(-\left( \frac{\beta \gamma}{(1 - \alpha)(1 - \delta)} -1\right)  + o(1) - \sqrt{\left(\frac{\beta \gamma}{(1 - \alpha)(1 - \delta)}-1\right)^2 + o(1)}\right) \nonumber \\
        & \times \left( 2\frac{\gamma}{1 - \delta} Z + o(Z)\right)^{-1} \nonumber \\
        & = \left( -2\left( \frac{\beta \gamma}{(1 - \alpha)(1 - \delta)} -1\right) + o(1)\right)\times \left( 2\frac{\gamma}{1 - \delta} Z + o(Z)\right)^{-1} \nonumber \\  
         & = \frac{1-\delta}{2\gamma}Z^{-1}\left( - 2\frac{\beta \gamma}{(1- \alpha)(1 - \delta)} + 2 + o(1) \right),
   \end{align}
    when $Z$ is around $0$, which in particular implies that $\val_Z(X^-_i(Z)) = -1$ as 
    $$ \frac{\beta}{1 - \alpha} \neq \frac{1 - \delta}{\gamma},$$
    since otherwise it implies $f(X)$ and $g(X)$ share a common fixed point. Then from Equation (\ref{eq: rationalproductsofroots1}), we have 
    \begin{equation}\label{eq: valuation-of-X+>0}
        \val(X^+_i(Z)) = k.
    \end{equation}
    \newline 

    \noindent\textbf{Subcase \RNum{1}.}  Let's first suppose $k > 0$ and the system of Equation (\ref{eq: rationalcaseformal1}) and (\ref{eq: rationalcaseformal3}) have infinitely many zeros. That is saying they hold constantly for every $Z$. Then Equation (\ref{eq: rationalcaseformal1}) implies that 
    \begin{equation} \label{eq: case4contralk}
        A(X^+_i(Z))/B(X^+_i(Z)) = \frac{\beta}{1 - \alpha} - \frac{\beta}{1 - \alpha} Z + o(Z),
    \end{equation}
    when $Z$ is around $0$.
    However, if $k > 1$, we will have 
    $$ A(X^+_i(Z))/B(X^+_i(Z)) = A(0)/B(0) + O(Z^k),$$
    which contradicts Equation (\ref{eq: case4contralk}). If $0 < k < 1$, we can conjugate $f(X)$, $g(X)$ and $c(X)$ by $1/X$ to swap the role of $\alpha$ and $\delta$ and we get $k > 1$ again. The same argument shows that this case is also not possible. Thus, we left with the case $k = 1$. However, our assumption implies that $k \neq 1$ since otherwise $\alpha/\delta$ is a root of unity. So we are done in this case.
    \newline 

     \noindent\textbf{Subcase \RNum{2}.} Now let's suppose $k < 0$ and the system of Equation (\ref{eq: rationalcaseformal1}) and (\ref{eq: rationalcaseformal3}) have infinitely many zeros. Then Equation (\ref{eq: expression-xpm}) implies that 
     \begin{align}
         & X^+_i (Z) =  \left(-\left(-\frac{\beta \gamma}{(1 - \alpha)(1 - \delta)}\xi^iZ^k\right) + o(Z^k) + \sqrt{\left(-\frac{\beta \gamma}{(1 - \alpha)(1 - \delta)}\right)^2\xi^{2i}Z^{2k} + o(Z^{2k}) }\right) \nonumber \\
         &\times \left(-2\frac{\gamma}{1 - \delta} \xi^i Z^{k+1} + o(Z^{k+1})\right)^{-1} \nonumber \\
         & = 2 \left(\frac{\beta \gamma}{(1 - \alpha)(1 - \delta)}\xi^i Z^k + o(Z^k)\right) \times \left(-2\frac{\gamma}{1 - \delta}\xi^i Z^{k+1} + o(Z^{k+1})\right)^{-1} \nonumber \\
         & = -\frac{1-\delta}{2\gamma \xi^i}Z^{-1 -k}\left(2 \frac{\beta \gamma}{(1- \alpha)(1 -\delta)} \xi^iZ^k + o(Z^k)\right),
     \end{align} 
    when $Z$ is around $0$ and in particular we have that 
    $$ \val_Z(X^+_i(Z)) = -1.$$
    Now, if $-1< k < 0$, we have Equation (\ref{eq: rationalcaseformal3}) implies that 
\begin{align}
   & - \deg(A) -1 + k = \val_{Z}\left(A(X^+_i(Z))\left(\frac{1- \xi^i Z^k}{1- \delta}\gamma X^+_i(Z) + \xi^iZ^k\right)\right)   \nonumber \\ 
   &= \val_Z(B(X^{+}_i(Z))X^+_i(Z) ) = -\deg(B)-1,
\end{align}
which is impossible as $k$ is not an integer. This again gives a contradiction. Now, similarly by the conjugation argument we also know that $k < -1$ is also impossible. Moreover, by the assumption we have $k \neq -1$ since otherwise $\alpha \delta $ is a root of unity. Thus, we conclude the proof for $k < 0$ and Equation (\ref{eq: rationalcaseformal1}) and (\ref{eq: rationalcaseformal3}) has infinitely many zeros.
\newline

\noindent\textbf{Subcase \RNum{3}.} Let's now suppose $k > 0$ and the system of Equation (\ref{eq: rationalcaseformal2}) and (\ref{eq: rationalcaseformal4}) have infinitely many zeros. We calculate similarly as in Equation (\ref{eq: exk<0-}), when $Z$ is close to $0$, but we assume first that $0 < k < 1$ to get 
the following: 
$$ t(Z, \xi^i Z^k) = \K_1 - 1 - \K_1 \xi^i Z^k  + o(Z^k)$$
$$ t(Z, \xi^iZ^k)^2 = (\K_1 - 1)^2 - 2 \K_1 (\K_1 - 1) \xi^i Z^k + o(Z^k)$$
\begin{align}
    \sqrt{ t(Z,\xi^iZ^k)^2 - 4\K_1\xi^i Z^{k+1}(1 - Z)(1- \xi^iZ^k) } &=& \sqrt{(\K_1 -1)^2 - 2\K_1(\K_1 -1)\xi^iZ^k + o(Z^k)} \nonumber\\ &=& \K_1 - 1 - \K_1 \xi^i Z^k + o(Z^k), \nonumber
\end{align}
where $\K_1 = \beta\gamma/((1- \alpha)(1 - \delta))$.
Therefore, Equation (\ref{eq: expression-xpm}) gives us 
\begin{align}
    X^-_i(Z) &= (-2(\K_1 -1) + 2 \K_1\xi^iZ^k + o(Z^k))(2\K_2 Z)^{-1}(1 + \xi^iZ^k + o(\xi^i Z^k)) \nonumber \\
    &= -\frac{\K_1 -1}{\K_2} Z^{-1} + \frac{\K_1}{\K_2} \xi^i Z^{k-1} - \frac{\K_1 -1}{\K_2} \xi^iZ^{k-1} + o(Z^{k-1}) \nonumber\\ 
    &= -\frac{\K_1 -1}{\K_2} Z^{-1} + \frac{1}{\K_2} \xi^i Z^{k-1} + o(Z^{k-1})  ,
\end{align}
where $\K_2 = \gamma/(1 - \delta)$. Notice that this is just a further expansion of the expression (\ref{eq: exk<0-}) when $0 < k < 1$. Since $\val_Z(X^-_i(Z)) = -1$ and 
$$ ZX^{-}_i(Z) + \frac{\beta}{1 - \alpha} = \K^{-1}_2 + o(1),$$ Equation (\ref{eq: rationalcaseformal2}) implies that $\deg(A) = \deg(B) = m$ for some positive integer $m$. Then the left hand side of Equation (\ref{eq: rationalcaseformal2}) is 
\begin{align}
&A\left( -\frac{\K_1 -1}{\K_2} Z^{-1} + \K^{-1}_2 \xi^i Z^{k-1} + o(Z^{k-1})\right) \nonumber \\
&= a_m \left(  -\frac{\K_1 -1}{\K_2} Z^{-1} + \K^{-1}_2 \xi^i Z^{k-1} + o(Z^{k-1})\right)^m + O(Z^{-m+1}) \nonumber\\
  &=  a_m \left(- \frac{\K_1-1}{\K_2}\right)^m Z^{-m} + a_m m\left(-\frac{\K_1 -1}{\K_2}\right)^{m-1}\K^{-1}_2\xi^iZ^{-m +k} + o(Z^{-m+k}),
\end{align}
where $a_m$ is a non-zero constant.
The right hand side of Equation (\ref{eq: rationalcaseformal2}) is 
\begin{align}
&B\left(  -\frac{\K_1 -1}{\K_2} Z^{-1} + \K^{-1}_2 \xi^i Z^{k-1} + o(Z^{k-1}) \right)\left( -\frac{\K_1 -1}{\K_2}  + \K^{-1}_2 \xi^i Z^{k} + o(Z^{k}) + \K_3  - \K_3 Z \right) \nonumber \\
&= b_m\left( -\frac{\K_1 -1}{\K_2} Z^{-1} + \K^{-1}_2 \xi^i Z^{k-1} + o(Z^{k-1})\right)^m\left(\K_3-\frac{\K_1 - 1}{\K_2} + \K^{-1}_2\xi^iZ^k + o(Z^k) \right) + O(Z^{-m + 1}) \nonumber\\
&= b_m\left(\K_3 - \frac{\K_1-1}{\K_2} \right )\left( -\frac{\K_1 -1}{\K_2}\right)^m Z^{-m} + b_m\left(- \frac{\K_1-1}{\K_2} \right)^m\K^{-1}_2\xi^iZ^{-m+k} \nonumber\\&+ b_m m\left(-\frac{\K_1-1}{\K_2}\right)^{m-1}\K^{-1}_2\left(\K_3 - \frac{\K_1 -1}{\K_2}\right)\xi^iZ^{-m +k} + o(Z^{-m + k})\nonumber \\
    &=b_m\left(\K_3 - \frac{\K_1-1}{\K_2} \right )\left( -\frac{\K_1 -1}{\K_2}\right)^m Z^{-m} + b_m(m+1)\left( - \frac{\K_1 -1}{\K_2}\right)^m\K^{-1}_2 \xi^i Z^{-m+k} \nonumber \\&+ b_m m \K_3 \left( -\frac{\K_1 -1}{\K_2}\right)^{m-1}\K^{-1}_2\xi^i Z^{-m + k} + o(Z^{-m + k}),
\end{align}
where $\K_3 = \beta/(1- \alpha)$ and $b_m$ is a non-zero constant. Notice by definition that $\K_1=\K_2\K_3$, we obtain 
$$ -\frac{\K_1 - 1}{\K_2} = \K_2^{-1} - \K_3 \neq 0,$$
and 
$$ \K^{-1}_2 \neq 0.$$
Therefore, looking at the coefficients of $Z^{-m}$ terms of Equation (\ref{eq: rationalcaseformal2}) gives us 
$$ a_m = \K^{-1}_2b_m.$$
Then, examining the coefficients of $Z^{-m+k}$ terms of Equation (\ref{eq: rationalcaseformal2}) gives us
\begin{align}
  \K^{-1}_2b_m m\left(-\frac{\K_1 -1}{\K_2}\right)^{m-1}\K^{-1}_2 \nonumber\\=  b_m(m+1)\left( - \frac{\K_1 -1}{\K_2}\right)^m\K^{-1}_2 +  b_m m \K_3 \left( -\frac{\K_1 -1}{\K_2}\right)^{m-1}\K^{-1}_2,  
\end{align}
which is equivalent to
\begin{align}
    m(\K_2^{-1} - \K_3)b_m \left( \K_2^{-1} - \K_3\right)^{m-1} \K^{-1}_2  = (m+1)b_m(\K_2^{-1} - \K_3)^m\K^{-1}_2.
\end{align}
This is a contradiction as $m \neq m +1$.

    Similarly, if $k > 1$, we conjugate $f(X)$, $g(X)$ and $c(X)$ by $1/X$ to swap the role of $\alpha$ and $\delta$ and we get $0 < k < 1$ again. The same argument shows that this case is also not possible. Thus, we left with the case $k = 1$. However, our assumption implies that $k \neq 1$ since otherwise $\alpha/\delta$ is a root of unity. So we are done in this case.  
     \newline

\noindent\textbf{Subcase \RNum{4}.} Let's now suppose $k < 0$ and the system of Equation (\ref{eq: rationalcaseformal2}) and (\ref{eq: rationalcaseformal4}) have infinitely many zeros.  We first suppose $k < -1$. To ease the notation, we denote 
    $$ \K \vcentcolon = \frac{\beta \gamma}{(1 - \alpha)(1 - \delta)}.$$
    Notice that when $|Z| \to \infty$, we have
    \begin{equation}
        t(Z, \xi^iZ^k) =  - \K Z  + (\K -1) + (1 + \K)\xi^iZ^{k+1}- \xi^iZ^k \K,
    \end{equation}
    \begin{equation}
        t^2(Z, \xi^iZ^k) = \K^2  Z^{2}  -2 \K(\K - 1) Z - 2 \K (\K +1)\xi^i Z^{k+2}  + o(Z^{k+2}),
    \end{equation}
    \begin{equation}
        4\xi^i Z^{k+1} (1 - Z)(1 - \xi^iZ^k)\K = - 4 \K \xi^i Z^{k+2} + o(Z^{k+2}).
    \end{equation}
     Thus, by Equation (\ref{eq: expression-xpm}), we have 
    \begin{align}
        & X^-_i(Z) = \left(- t(Z, \xi^i Z^k) -  \right. \nonumber \\
        & \left. \sqrt{ 
            \K^2  Z^{2}  - 2\K (\K -1) Z- 2 \K (\K -1)\xi^i Z^{k + 2}  +  o(Z^{k+2})  
        } \right) \nonumber \\
        & \times \left(- 2 \xi^i Z^{k+1} \frac{\gamma}{ 1- \delta} + 2 Z \frac{\gamma}{ 1- \delta}\right)^{-1},
    \end{align}
    and, breaking the square root, we have that this equals to
    \begin{align}
        & = \left(-t(Z, \xi^iZ^{k})  - \K Z + (\K - 1) + (\K -1) \xi^i Z^{k+1} + o(Z^{k+1})\right) \nonumber \\
        & \times \left(- 2 \xi^i Z^{k+1} \frac{\gamma}{ 1- \delta} + 2 Z \frac{\gamma}{ 1- \delta}\right)^{-1}
    \end{align}
    \begin{equation}
         = \frac{- 2 \xi^i Z^{k+1} + o(Z^{k+1})}{2 \gamma/(1 - \delta)} Z^{-1}\sum^{\infty}_{j = 0} \xi^{ij} Z^{kj}
    \end{equation}
    \begin{equation}
         = -\frac{1 - \delta}{\gamma}\xi^i Z^k+ o(Z^{k}).
    \end{equation}

    Now, Equation (\ref{eq: rationalcaseformal2}) implies that 
    \begin{equation}\label{eq: A2midstepk<0}
        A\left(-\frac{1 - \delta}{\gamma}\xi^i Z^k + o(Z^k)\right) = B\left(-\frac{1 - \delta}{\gamma}\xi^i Z^k + o(Z^k)\right)\left( -\frac{\beta}{1 - \alpha}Z + o(Z) \right),
    \end{equation}
    when $|Z| \to \infty$. Since $A(X)$ is a polynomial, the left hand side is bounded as $|Z|$ increasing. Therefore, 
    to make the right hand side of Equation (\ref{eq: A2midstepk<0}) bounded, we need 
    $$ B(X) = X^m B_1(X)$$
    for some positive integer $m$ and polynomial $B_1(X)$ with non-trivial constant term $b_m$. Thus, we have
    \begin{equation}
        B\left(-\frac{1 - \delta}{\gamma}\xi^i Z^k + o(Z^k)\right) = b_m \left(- \frac{1 - \delta}{\gamma}\right)^m\xi^{im} Z^{mk} + o(Z^{mk}).
    \end{equation}
     In particular, $B(0) = 0$. 
    
    However, since $k < -1$, we have $mk + 1 < 0$. Therefore, the right hand side of Equation (\ref{eq: A2midstepk<0}) also goes to $0$ when $|Z|$ approaches to infinity. This implies that $A(0) = 0$, which contradicts that $A(X)$ and $B(X)$ don't share a common zero. 
    Notice that when $-1 < k < 0$, we can swap the role of $\alpha$ and $\delta$ by conjugating $f(X)$, $g(X)$ and $c(X)$ by the map $1/X$ just like what we did above. Then we are back to $k < -1$ case. Also, by the assumption we have $k \neq -1$ as otherwise it will imply $\alpha \delta$ is a root of unity.

\end{proof}

\begin{proof}[Proof of Lemma \ref{lem: lemCase1}]
    Suppose there are infinitely many $n \in \N$ such that 
    $$
        c(X) = f^n(X) = g^n(X)
    $$
    has solutions. Let's denote 
    $$X^{\pm} _n = \frac{-t_n \pm \sqrt{t^2_n - 4\alpha^nn\gamma(1 - \alpha^n)\beta/(1 - \alpha)}}{2n\alpha^n\gamma},$$
    $$ t_n = \alpha^n + n\gamma \frac{1- \alpha^n}{1- \alpha}\beta -1,$$
    where $X^\pm_n$ satisfies $f^n(X^\pm_n) = g^n(X^\pm_n)$.
    
    Let's first suppose $|\alpha|> 1$ and there are infinitely many $n \in \N$ such that
\begin{equation}\label{eq: nosharefixeq1}
        c(X^+_n) = f^n(X^+_n) 
    \end{equation}
    holds. Let's denote $c(X) = A(X)/B(X)$ where $A(X)$ and $B(X)$ are two polynomials. 
    Since 
    \begin{align}
        t^2_n - 4 \alpha^nn\gamma \frac{1 - \alpha^n}{1 - \alpha} \beta &=& \left(\frac{n\gamma\alpha^n \beta}{1 - \alpha}\right)^2 + \frac{2n\gamma\beta \alpha^{2n}}{1- \alpha} + \alpha^{2n}  -\frac{2n^2\gamma^2\alpha^n\beta^2}{(1-\alpha)^2} \nonumber\\ &-&\frac{2\gamma\beta}{1 -\alpha}n\alpha^n - 2\alpha^n  + \frac{\gamma^2\beta^2}{(1- \alpha)^2}n^2- \frac{2n\gamma\beta}{1- \alpha} + o(n),
    \end{align} 
    we have 
    \begin{align} \label{eq: xpmnexpansiond=1}
        & X^\pm_n=  \left(\frac{\gamma \beta}{1- \alpha}n\alpha^n -\alpha^n - \frac{\gamma\beta}{1-\alpha}n + 1 \pm \left(- \frac{\gamma\beta}{1 - \alpha}n\alpha^n - \alpha^n + \frac{n\gamma\beta}{1 - \alpha} + 1 - 2\alpha^{-n}+ o(\alpha^{-n})\right)\right) \nonumber \\ & \times (2n\alpha^n \gamma)^{-1}.
    \end{align}
    Therefore,
    $$\lim_{n \to \infty} |X^+_n| =  0,$$
    $$ \lim_{n \to \infty} |\alpha^n X^+_n| = \infty.$$
    Since we have 
    $$X^+_n \sim 1/n$$
    when $n$ goes large, we have similarly
    $$A(X^+_n)/B(X^+_n) \sim n^l,$$
    $$ \alpha^nX^+_n + \frac{1 - \alpha^n}{1- \alpha}\beta \sim \alpha^n$$
    when $n$ is large with some integer $l$, which contradicts the Equation (\ref{eq: nosharefixeq1}).

    Suppose $|\alpha| > 1$ and there are infinitely many $n \in \N$ such that 
    \begin{equation} \label{eq: nosharefixeq2}
        c(X^-_n) = f^n(X^-_n)
    \end{equation}
    holds. Notice that 
    $$ X^-_n = \frac{\beta}{1 - \alpha} - \frac{\beta}{1 - \alpha}\alpha^{-n} + \gamma^{-1}n^{-1}\alpha^{-2n} + o(n^{-1}\alpha^{-2n}) ,$$
    when $n$ becomes large.
    Therefore, 
    $$f^n(X^-_n) \sim n^{-1}\alpha^{-n}$$
    $$ A(X^-_n)/B(X^-_n) \sim \alpha^{-nm}$$
    for some integer $m$ when $n$ becomes large.
    These contradict the assumption that Equation (\ref{eq: nosharefixeq2}) has solutions for infinitely many $n \in \N$.
    
    Now suppose $|\alpha| < 1$ and there are infinitely many $n \in \N$ such that Equation (\ref{eq: nosharefixeq2}) holds.
    Now, similarly, 
    since 
    \begin{equation}
        t^2_n - 4\alpha^n n \gamma\beta \frac{1- \alpha^n}{1- \alpha} = \frac{\gamma^2\beta^2}{(1- \alpha)^2}n^2 - 2\frac{\beta \gamma}{1 - \alpha}n + 1 - 2 \frac{\gamma^2 \beta^2}{(1- \alpha)^2}n^2 \alpha^n  - 2 \alpha^n + o(\alpha^n)
    \end{equation}
    when $n$ is large,
    we have 
    \begin{align}
    & X^\pm_n = \left(\frac{\gamma\beta}{1 - \alpha} n\alpha^n - \alpha^n - \frac{\beta \gamma}{1- \alpha} n + 1 \pm \left( \frac{\beta \gamma}{1 -\alpha}n - 1 - \frac{\gamma\beta}{1- \alpha}n\alpha^n - \alpha^n + o(\alpha^n)\right)\right) \nonumber \\ &\times (2n\alpha^n\gamma)^{-1} \nonumber
    \end{align}
    So,
    $$X^-_n =  -\frac{\beta}{ 1- \alpha}\alpha^{-n} + \frac{1}{\gamma n\alpha^n} + o(n^{-1}\alpha^{-n}),$$
    and we have
    $$ A(X^-_n)/B(X^-_n) \sim \alpha^{ln}$$
    for some integer $l$ when $n$ is large. However,
    $$ \alpha^nX^-_n + \frac{1 - \alpha^n}{1- \alpha}\beta \sim \frac{1}{n}$$
    contradicting that Equation (\ref{eq: nosharefixeq2}) has solution for infinitely many $n \in \N$. 

    Suppose $|\alpha| < 1$ and there are infinitely many $n \in \N$ such that Equation (\ref{eq: nosharefixeq1}) holds. 
    Then $X^+_n \sim \frac{1}{n }$
    when $n$ is large. 
    Thus, 
    $$ A(X^+_n)/B(X^+_n)\sim n^l$$
    for some integer $l$ and 
    $$ \alpha^n X^+_n + \frac{1 - \alpha^n}{1-  \alpha}\beta = \frac{\beta}{1 - \alpha}  - \frac{\beta}{1 - \alpha}\alpha^n + o(\alpha^n)$$
    when $n$ is large.
    If $l \neq 0$, then these contradict the assumption that the Equation (\ref{eq: nosharefixeq1}) has solutions for infinitely many $n \in \N$. If $l = 0$, then 
    $$ A(X^+_n)/B(X^+_n)  = A(0)/B(0) + O(1/n^m) $$
    for some positive integer $m$. Still, these contradicts the assumption that the Equation (\ref{eq: nosharefixeq1}) has solution for infinitely many $n \in \N$ by comparing the rates they approach constants when $n$ is large.\\

    Now, suppose $|\alpha| = 1$ but not a root of unity and there are infinitely many $n \in \N$ such that Equation (\ref{eq: nosharefixeq2}) holds. After eliminating denominators, combining terms and taking squares, we found that this is equivalent to that there exists a polynomial $F(Z, W)$ in two variables such that $F(n, \alpha^n) = 0$ for infinitely many $n \in \N$. By \cite[Proposition 2.5.1.4]{BGT}, we have $\{a_n = F(n, \alpha^n)\}_{n \in \N}$ satisfies a linear recurrence. Then, by \cite[Theorem 2.5.4.1]{BGT}, the set $\{n \in \N : a_n = 0\}$ is a finite union of arithmetic progressions and, since we assumed this set is infinite, we have that there exists $s_1$ and $s_2 \in \N$ such that $a_n = 0$ for every $n \in \{s_1 m + s_2 : m \in \N\}$. This is equivalent to saying that Equation (\ref{eq: nosharefixeq2}) holds for any $n \in \{s_1 m + s_2 : m \in \N\}$. Then, we consider $n$ within an infinite subsequence $I$ of $\{s_1 m + s_2 : m \in \N\}$ such that $|1 - \alpha^n|$ is bounded from below independently from $n$ and not converging. 
    
    Notice that we can always find such a subsequence $I$: since $\alpha^{n}$, $n \in R = \{s_1 m + s_2 : m \in \N \}$, is dense on the unit circle as $\alpha$ is not a root of unity, we can take a small $\epsilon >0$ independent of $n$ and then there are infinitely many $n \in R$ such that $\alpha^n \not \in B(1, \epsilon)$. Denote the subset of $R$ satisfies this condition as $I'$. Since, $R$ is dense in the unit circle $S^1$, we have $I'$ is also dense in $S^1 \setminus B(1,\epsilon)$. Thus, we can take a subsequence $I \subseteq I'$, such that $\alpha^n$ won't converge to any point on the unit circle when $n$ ranges in $I$. For example, we can achieve this by choosing two different points $p_1$ and $p_2$ on $S^1 \setminus B(1, \epsilon)$ and for each $r \in \N$, we pick a $n_{1,r}$ and $n_{2,r} \in I'$ such that 
    $$ |\alpha^{n_{1,r}} - p_1| < 1/r,$$
    $$ |\alpha^{n_{2,r}} - p_2| < 1/r,$$
    and let $I = \bigcup_{r \in \N}\{n_{1,r}, n_{2,r} \} \cap \{n \in \N : \alpha^n \in S^1 \setminus B(1, \epsilon)\}.$

    Now, we expand $X^{\pm}_n$ when $n$ goes large as follows:
    \begin{equation}
        X^{\pm}_n = \frac{-t_n \pm \sqrt{t^2_n - 4\alpha^n n \gamma (1 - \alpha^n)\beta/(1-\alpha)}}{2n\alpha^n \gamma},
    \end{equation}
    \begin{align}
        = \frac{-t_n  \pm \sqrt{ h_n + O(1)}}{2n\alpha^n \gamma} \nonumber,
    \end{align}
    where $$h_n = \frac{n^2 \gamma^2 (1 - \alpha^n)^2\beta^2}{ (1- \alpha)^2} + 2\frac{(\alpha^n-1)n\gamma(1 - \alpha^n)\beta}{(1-\alpha)}- 4\frac{\alpha^n n \gamma(1-\alpha^n)\beta}{(1-\alpha)}.$$
    Breaking the square root, we have
    \begin{equation}\label{eq: X_nwhenk=0}
        X^\pm_n = \frac{-t_n \pm (n \gamma(1-\alpha^n)\beta/(1- \alpha) -\alpha^n -1 + o(1))}{2n\alpha^n\gamma}.
    \end{equation}
    Thus, we have 
    \begin{equation}\label{eq: X+asyk=0}
        X^+_n = - \frac{1}{n\gamma} + o\left(\frac{1}{n}\right),
    \end{equation}
    when $n \in I$ becomes large. Then Equation (\ref{eq: nosharefixeq1}) implies 
    \begin{equation}
        c\left(-\frac{1}{n\gamma} + o\left(\frac{1}{n}\right)\right) = \alpha^n\left(- \frac{1}{n\gamma} + o\left(\frac{1}{n}\right)\right) + \frac{1 - \alpha^n}{1 - \alpha}\beta.
    \end{equation}
    However,
    \begin{equation}
        c\left(-\frac{1}{n\gamma} + o\left(\frac{1}{n}\right)\right) \sim (1/n)^l
    \end{equation}
    for some integer $l$ and 
    \begin{equation}
         \alpha^n\left(- \frac{1}{n\gamma} + o\left(\frac{1}{n}\right)\right) + \frac{1 - \alpha^n}{1 - \alpha}\beta =  (1 - \alpha^n)\frac{\beta}{1 - \alpha} + o(1)
    \end{equation}
    when $n$ is large. This gives a contradiction if $l \neq 0$. 
    
    Notice that if $l =0$, then we still have
    $$\lim_{n \to \infty}X^+_n = 0,$$
    by Equation (\ref{eq: X+asyk=0}).
    Then Equation (\ref{eq: nosharefixeq1}) implies that 
    \begin{equation}
        \lim_{n \to \infty} A(X^+_n)  = \lim_{n \to \infty} B(X^+_n) \left(\alpha^n \left(-\frac{1}{n \gamma} + o\left(\frac{1}{n}\right)\right) + \frac{1- \alpha^n}{1- \alpha}\beta\right).
    \end{equation}
    Now, if $\lim_{n \to \infty} B(X^+_n) = 0$, then this can only hold if $\lim_{n \to \infty} A(X^+_n) = 0$ which is impossible as $A(X)$ and $B(X)$ don't share common zeros. On the other hand, if $\lim_{n \to \infty} B(X^+_n) = b$, where $b$ is a non-zero constant, then 
    \begin{equation}
        \lim_{n \to \infty} A(X^+_n) = b \lim_{n \to \infty} \frac{ 1- \alpha^n}{1- \alpha}\beta,
    \end{equation}
    which is not converging. This is a contradiction as $\lim_{n \to \infty} A(X^+_n) = A(0)$.

    Now, suppose $|\alpha| = 1$ but not a root of unity and there are infinitely many $n \in \N$ such that Equation (\ref{eq: nosharefixeq1}) holds. Similarly as above, there exists $s_3, s_4 \in \N$ such that  Equation (\ref{eq: nosharefixeq1}) holds for any $n \in \{s_3 m + s_4| m \in \N\}$. We consider $n$ within an infinite subsequence of $I \subseteq \{s_3 m + s_4| m \in \N\}$ such that there exists a small $\epsilon > 0$ so that for all $n \in I$, we have $|1 - \alpha^n| > \epsilon$ and $\{\alpha^n : n \in I\}$ is dense in $S^1 \setminus B(1,\epsilon)$. This can be achieved in the same way as in the previous case since $\alpha^n$ is dense on $S^1$, when $n$ ranges in $ \{s_3 m + s_4: m\in \N\}$. Then, again by Equation (\ref{eq: X_nwhenk=0}), we have 
    \begin{equation}
        X^-_n = -\frac{\beta}{1- \alpha}\frac{1 - \alpha^n}{\alpha^n} + \frac{1}{n\alpha^n \gamma} + o\left(\frac{1}{n}\right).
    \end{equation}
    Therefore, Equation (\ref{eq: nosharefixeq2}) implies that 
    \begin{equation}
        c(X^-_n) = \frac{1}{n\gamma}+ o\left(\frac{1}{n}\right).
    \end{equation}
    However, for a non-zero rational function $c(X)$,
    \begin{equation}
       c\left(-\frac{\beta}{1- \alpha}\frac{1 - \alpha^n}{\alpha^n} + \frac{1}{n\alpha^n \gamma} + o\left(\frac{1}{n}\right)\right) 
    \end{equation}
    cannot converge to $0$ when $n$ becomes large since otherwise, as $1/\alpha^n$ is dense on $S^1 \setminus B(1,\epsilon)$, we would have 
    $$ c\left(- \frac{\beta}{1- \alpha} u + \frac{\beta}{1- \alpha}\right) = 0,$$
    for all $u \in S^1\setminus B(1,\epsilon)$ which implies that $c$ is constantly zero. This gives a contradiction.
   
\end{proof}

Now we can prove Theorem \ref{thm: mainlinear} and we restate it here for the convenience of readers:
\begin{thm} \label{thm: mainaut}
    Let $f(X)$, $g(X)$ be automorphisms on $\P^1$ defined over $\C$ and $c(X)$ be a rational function defined over $\C$. If the semigroup generated by $f(X)$ and $g(X)$ under compositions is free, then there are only finitely many $\lambda \in \C$ such that 
    \begin{equation} \label{eq: targeteq1}
         f^n(\lambda) = g^n(\lambda) = c(\lambda) 
    \end{equation}
    for some positive integer $n$ unless $f(X)$, $g(X)$ are simultaneously conjugated by an automorphism on $\P^1(\C)$ to one of the following:
    \begin{enumerate}
        \item $\alpha X + \beta$, $X/(\gamma X + \delta)$, with some $\alpha, \delta, \gamma, \beta \in \C^*$ such that one of $\alpha /\delta $ and $\alpha \delta$ is a root of unity;
        \item $\alpha X + \beta$, $\delta X + \gamma$,  with some $\alpha, \delta \in \C^*$ such that $\alpha$ and $\delta$ are not roots of unity, $\gamma$ and $\beta$ are not both $0$, and either $\alpha /\delta $ is a root of unity other than $1$ or one of $\alpha^2/\delta$ and $\delta^2/\alpha$ is a root of unity.
    \end{enumerate}
    
\end{thm}
Before we start the proof, we gather here four counter-examples in Remark (\ref{rmk: counter-1}), (\ref{rmk: counter-2}), (\ref{rmk: counter-3}), (\ref{rmk: counter-4}) and (\ref{rmk: counter-5}) corresponding to the two exceptional cases mentioned in the statement of Theorem \ref{thm: mainaut}. Therefore, Theorem \ref{thm: mainaut} is the best we can expect.

\begin{rmk}\label{rmk: counter-1}
    Suppose $f(X) = X + \beta$, $g(X) = X/(\gamma X + 1)$ and $c(X) = -\beta / (\gamma X)$, with some $\beta ,\gamma \in \C^*$, we have that there are infinitely many $n \in \N$ such that Equation (\ref{eq: targeteq1}) has solutions. One of the solutions is given by $$x_n = \left(-n\beta \gamma + \sqrt{n^2\beta^2 \gamma^2 - 4\beta \gamma}\right)/(2\gamma).$$  However, $f(X)$ and $g(X)$ may still generate a free semigroup. For example, when $f(X) = X + 2$ and $g(X) = X/(2X + 1)$, we denote 
    $$V_1 = \{ x \in \R^+ : x > 1\},$$
    $$ V_2 = \{ x \in \R^+ : x < 1\} .$$
    Then we have 
    $$ f(V_1) \subseteq V_1,$$
    $$ f(V_2) \subseteq V_1,$$
    $$ g(V_2) \subseteq V_2,$$
    $$ g(V_1) \subseteq V_2.$$
    Thus, the Ping-pong Lemma \ref{lem: ping-pong} implies $\langle f, g \rangle$ is a free semigroup under composition. 
\end{rmk}
\begin{rmk}\label{rmk: counter-2}
    Suppose $f(X) = \alpha X + \beta$ and $g(X) = X / (\gamma X + \alpha)$, where $\gamma$ and $\beta$ are non-zero and $\beta/(1 - \alpha) \neq (1- \alpha)/\gamma$. Denote 
    \begin{align*} K_1 &= \frac{\beta}{1 - \alpha} + \frac{1 - \alpha}{\gamma},\\
     K_2 &= \frac{\beta}{1 - \alpha} - \frac{1 - \alpha}{\gamma},\\
    K_3 &= \frac{\beta}{\gamma}.\end{align*}
    Let
    $$c(X) = \frac{K_2 X^2}{-2K_3 -2 X^2 + K_1X} + \frac{\beta}{1 - \alpha} - \frac{\beta}{1 - \alpha} \frac{K_2 X}{-2K_3 -2 X^2 + K_1X}.$$
    One can directly verify that 
    $$ c(X) = f^n(X) = g^n(X)$$
    has 
    $$ X_n = \frac{ - K_1 + K_2 \alpha^{-n}  - \sqrt{(- K_1 + K_2 \alpha^{-n})^2 - 4}}{2}$$
    as a solution for each $n \in \N$. Furthermore, suppose $\gamma = 1$, $\beta =1$ and $\alpha = 2$ for example. Then, similarly as in Remark (\ref{rmk: counter-1}), one can use the Ping-pong Lemma \ref{lem: ping-pong} to show that $\langle f(X), g(X) \rangle$ is a free semigroup.
\end{rmk}
\begin{rmk}\label{rmk: counter-3}
    Suppose $f(X) = \alpha X + \beta$ and $g(X) = \delta X +\gamma$, where $\alpha$ and $\delta$ are not roots of unity but $\xi = \delta/\alpha$ is a root of unity of order $l > 1$. For each $i \in \{1,2, \dots, l-1\}$, denote 
    $$ K_{1,i} = \frac{1}{1 - \xi^i} \left(\frac{\gamma}{1- \delta} - \frac{\beta}{ 1- \alpha}\right),$$
$$ K_{2,i} = \frac{1}{1 - \xi^i} \left(\frac{\xi^i \gamma}{1- \delta} - \frac{\beta}{ 1- \alpha}\right). $$
Let $$c_i(X) = K_{1,i} + \frac{\beta}{1 - \alpha} - \frac{K_{1,i}}{X + K_{2,i}} \left(K_{2,i} + \frac{\beta}{1- \alpha}\right).$$
We have that 
$$ X_{n,i}= K_{1,i}\alpha^{-nl-i} - K_{2,i}$$
is a solution to 
$$ f^{nl+i}(X) = g^{nl+i}(X) = c_i(X)$$
for every $n \in \N$ and $i \in \{1,2, \dots, l-1\}. $
For an explicit example, if we take $f(X) = 2X +1$ and $g(X) = -2X$ then $\langle f, g \rangle$ is a free semigroup by \cite[Theorem 2]{KT21} as $f \circ g \neq g \circ f$. So this provides a counter-example such that the finiteness of solutions to $f^n(X) = g^n(X) = c(X)$ dooesn't hold even when $f$ and $g$ generate a free semigroup.
\end{rmk}
\begin{rmk}\label{rmk: counter-4}
    Suppose $f(X) = \alpha X + \beta $ and $g(X)= X/(\gamma X + \xi \alpha^{-1})$ where $\xi$ is a root of unity of order $m$, $\alpha \in \C^*$ is not a root of unity, $\gamma$ and $\beta$ are non-zero in $\C$. 
    Let's denote 
   $$
        K_1 \vcentcolon = \frac{\beta}{1 - \alpha},
    $$
    $$
    K_2 \vcentcolon = \frac{\gamma}{1 - \xi \alpha^{-1}}.
    $$
    Let's also define 
    $$
    c(X) = \frac{K_1K_2X - K_1}{K_1K_2 - K_2X} + K_1\left(1 - \frac{-K_1 + K_1K_2X}{K_2X(K_1-X)}\right).
    $$
    Then one can check directly that 
    \begin{align}
       & X_{n} = \left( - K_1K_2(1 - \alpha^n)(1 - \alpha^{-n})  \right. \nonumber \\ 
       & \left. -\sqrt{K^2_1K^2_2(1 - \alpha^n)^2(1 - \alpha^{-n})^2 - 4K_1K_2 (1 -\alpha^n)(1 - \alpha^{-n})} \right) \nonumber \\ \times &\left( 2\alpha^n \frac{1 - \alpha^{-n}}{1 - \xi \alpha^{-1}} \gamma\right)^{-1}
    \end{align}
    when $n \in \{km : k \in \N\}$ is an infinite sequence satisfying 
    \begin{equation}
        f^n(X_n) = g^n(X_n) = c(X_n).
    \end{equation}
     
    Since the direct computation might be very complicated, we will demonstrate here on how to construct such a $c(X)$ to make this hold. Let $\delta = \xi \alpha^{-1}$. Then the sequence of equations:
    \begin{equation}
        f^n(X_n) = g^n(X_n),
    \end{equation}
     when $n \in \{km : k \in \N\}$, will give a sequence of quadratic equations with variables $X_n$'s:
    $$
    \alpha^n(1 - \alpha^{-n})K_2 X^2_n + K_1K_2(1 - \alpha^n)(1 - \alpha^{-n})X_n + K_1(1 - \alpha^{n})\alpha^{-n} = 0 
    $$
    and we will denote the two solutions of these quadratic equations as $X^-_n$ and $X^+_n$. Then we have 
    \begin{equation}
        X^-_n X^+_n = \frac{\alpha^{-n}(1 - \alpha^n)}{\alpha^n(1 - \alpha^{-n})} \frac{K_1}{K_2} = - \alpha^{-n} \frac{K_1}{K_2},
    \end{equation}
    \begin{equation}
        X^-_n + X^+_n = \frac{ - K_1K_2(1 - \alpha^n)(1 - \alpha^{-n})}{ \alpha^n K_2(1 - \alpha^{-n})} = K_1 (1 - \alpha^{-n}),
    \end{equation}
    where $n \in \{km : k \in \N\}$.
    Combining these two equations, we get 
    \begin{equation}
        K_1(1 - \alpha^{-n})X^-_n - (X^-_n)^2 = - \alpha^{-n}\frac{K_1}{K_2},
    \end{equation}
    which implies
    \begin{equation}\label{eq: alpha^neq}
        \alpha^n = \frac{- K_1 + K_1K_2X^-_n}{X^-_nK_2(K_1 - X^-_n)}.
    \end{equation}
    Let $X_n = X^-_n$. Then using that 
    $$c(X_n) = f^n(X_n) = \alpha^n X_n + \frac{1 - \alpha^n}{1 - \alpha}\beta$$ 
    and plugging in Equation (\ref{eq: alpha^neq}) will give us the desired $c(X_n)$.

    Furthermore, notice that when $\alpha = 2$, $\beta =\gamma = \xi =  1$, we can use the Ping-pong lemma, Lemma \ref{lem: ping-pong}, similarly as in Remark \ref{rmk: counter-1} by choosing 
     $$V_1 = \{ x \in \R^+ : x > 1\},$$
    $$ V_2 = \{ x \in \R^+ : x < 1\} ,$$
    to argue that $\langle f(X), g(X) \rangle$ is a free semigroup under the compositions.

\end{rmk}
\begin{rmk}\label{rmk: counter-5}
    Let $\alpha$ be any non-zero and not roots of unity complex number. Let $\mu$ be a root of unity of order $m \in \N$. Let $\delta = \mu \alpha^2$ and $\beta = \alpha -1$. Then we have $X_{mn} = \alpha^{-mn}$ satisfies
    \begin{equation}
        f^{mn}(X_{mn}) = g^{mn}(X_{mn}) = 1/X_{mn}
    \end{equation}
    for all $n \in \N$, where $f(x) = \alpha x + \beta$ and $g(x) = \delta x $.

    Furthermore, when $\alpha = 2$, $\delta = 4$ and $\beta = 1$, let $V_1$ denote the set of odd natural numbers and $V_2$ denote the set of even natural numbers. We have 
    $$ f(V_1 \cup V_2) \subseteq V_1,$$
    $$ g(V_1 \cup V_2) \subseteq V_2.$$
    Thus, Lemma \ref{lem: ping-pong} tells us that $\langle f, g \rangle$ is a free semigoup under the compositions.
    
\end{rmk}
\begin{proof}[Proof of Theorem \ref{thm: mainaut}]
    
   After conjugation, we assume that 
    $$ f(X) = \alpha X + \beta, $$
    $$ g(X) = \frac{X}{\gamma X + \delta}.$$
    We always assume that $\alpha$ and $\delta$ are non-zero as otherwise it will result in a constant map. \newline

    Suppose that at least one of $\alpha$ and $\delta$ is a root of unity other than $1$. Assume without loss of generality it is $\delta$. Then there exists a positive integer $m$ such that $\delta^m = 1$ and
    $$g^m(X) = \frac{X}{\delta^m} = X.$$
    Therefore, $\langle g^m(X), f(X) \rangle$ is not a free semigroup under composition and there is nothing to show.
    \newline

Now, we assume that neither $\alpha$ nor $\delta$ is a root of unity other than $1$ and $\alpha/ \delta$, $\alpha \delta$ are not a root of unity. By Lemma \ref{lem: reduce-to-sharingfixpoint}, we can assume that $f$ and $g$ share at least one common fixed point.

    If $f$ and $g$ share two fixed points, then $\beta = \gamma = 0$ and they are both scaling maps and certainly not generating a free semigroup under composition.
    
    Suppose $f$ and $g$ share only one common fixed point. Now we first assume that $c(X)$, $f(X)$ and $g(X)$ are all defined over $\overline{\Q}^*$. Then after conjugating $c(X)$, $f(X)$ and $g(X)$ by a common automorphism, we assume that $f(X) = \alpha X + \beta $ and $g(X) = \delta X + \gamma$, where $\alpha, \delta \in \overline{\Q}^*$, $\beta, \gamma \in \overline{\Q}$ and $\beta, \gamma$ are not both $0$. Now our discussion is divided into several cases:
    \newline

    \noindent\textbf{Case I.} Suppose at least one of $\delta$ and $\alpha$ is a root of unity other than $1$. Without loss of generality we assume it is $\delta$. Then there exists $m$ such that $g^m(X) = X$ and we have $g^m \circ f = f \circ g^m$. Thus, $\langle f ,g \rangle$ is not a free semigroup. Same argument works if $\alpha$ is a root of unity other than $1$ and $\delta$ is not a root of unity.
    \newline 

    \noindent\textbf{Case II.} Suppose $\delta = 1$ and $\alpha$ is not a root of unity. 
    \begin{lem}\label{lem: thmCase2}
       There are only finitely many $n \in \N$ such that $c(X) = f^n(X) = g^n(X)$ has solution.
    \end{lem}
    This concludes this case.
    \newline

    \noindent\textbf{Case III.} Suppose both $\alpha$ and $\delta$ are not roots of unity and $\alpha/\delta$, $\alpha^2 /\delta$ and $\delta^2/\alpha$ are not roots of unity. Without loss of generality, we assume that $\beta \neq 0$ since $\gamma$ and $\beta$ are not both $0$ and we can always relabel $f$ and $g$ if necessary. We have $$X_n = \left( \frac{1 - \delta^n}{1 -\delta}\gamma - \frac{1 - \alpha^n}{1 - \alpha} \beta\right)/(\alpha^n - \delta^n)$$ is the solution to 
    $$ f^n(X) = g^n(X).$$
    If there are infinitely many $n \in \N$ such that the Equation (\ref{eq: targeteq1}) has solutions, then 
    $$ c(X_n) = f^n(X_n)$$
    for infinitely many $n \in \N$. 
    It means that 
    $$ A(X_n) = \alpha^n B(X_n)X_n + \frac{1 - \alpha^n}{1 - \alpha}\beta B(X_n)$$
    holds for infinitely many $n \in \N$.
    This is equivalent to say 
    \begin{equation}\label{eq: polynomialformal1}
        F(Z,W) = A(X(Z,W)) - ZB(X(Z,W))X(Z,W)-\frac{1 - Z}{1 - \alpha}\beta B(X(Z,W)) = 0
    \end{equation}
    holds for infinitely many $\{(\alpha^n , \delta^n) : n \in \N\}$, where 
    $$ X(Z,W) = \left( \frac{1 - W}{1 -\delta}\gamma - \frac{1 - Z}{1 - \alpha} \beta\right)\Big/(Z - W).$$
    
    We first claim that the rational function $F(Z,W)$ is not constantly zero. To prove this, we suppose on the contrary that Equation (\ref{eq: polynomialformal1}) holds constantly for any choice of values of $Z$ and $W$.
    Let's denote $m_1 = \deg(A)$ and $m_2 = \deg(B)$.
    
    Then choose a specialization $Z = W+1$, we have $$X(Z,Z-1) = \left(\frac{\beta}{1 - \alpha} - \frac{\gamma}{ 1- \delta}\right)Z + \frac{2\gamma}{1-\delta} - \frac{\beta}{1-\alpha}$$ in this case. Then Equation (\ref{eq: polynomialformal1}) holds constantly implies that $m_1 = m_2 + 2$ unless
    \begin{equation}\label{eq: sharingtwofix1}
        \frac{\beta}{1 - \alpha} =  \frac{\gamma}{1-\delta}.
    \end{equation} While Equation (\ref{eq: sharingtwofix1}) implies $f(X)$ and $g(X)$ share two fixed points and this case has already been handled. So, we assume Equation (\ref{eq: sharingtwofix1}) doesn't hold and thus $m_1  = m_2 + 2$. 
    
    On the other hand, we can choose another specialization $ Z = 0$.
    Then 
    $$ X(0, W) = \frac{\gamma}{1 - \delta} + \left( \frac{\beta}{1 - \alpha} - \frac{\gamma}{1 - \delta}\right)W^{-1},$$
    and Equation (\ref{eq: polynomialformal1}) implies that 
    $$A(X(0,W)) = \frac{\beta}{1 - \alpha} B(X(0,W)).$$
    This holds constantly for all $W$ will require that $m_1 = m_2$, which gives a contradiction.
    Therefore, Equation (\ref{eq: polynomialformal1}) doesn't hold constantly for every values of $Z$ and $W$.

    Let's then write 
    $$ F(Z,W) = \sum_{0 \leq i  \leq k_1, 0 \leq j \leq k_2} c_{i,j}Z^iW^j,$$
    for some positive integers $k_1$ and $k_2$ after clearing the denominators. Then we use the same argument as in Lemma \ref{lem: reduce-to-sharingfixpoint} by applying S-unit Theorem to conclude that $\alpha$ and $\delta$ are multiplicatively dependent. Thus, there exists $k \in \Q^*$ such that $\xi \alpha^k= \delta$, for some root of unity $\xi$ of order $k_3 \in \N$. Note that, we can further relabel $f$ and $g$ if necessary (we will not assume $\beta \neq 0$ from now on which is an assumption obtained by a specific choice of labeling) so that $k \geq  1$ or $k \leq -1$.

    Therefore, we have 
    $$ X_{n} = \left(\frac{1 - \xi^n\alpha^{nk} }{1- \xi\alpha^k}\gamma - \frac{1 - \alpha^n}{1- \alpha}\beta\right)\Big/(\alpha^n - \xi^n\alpha^{nk}).$$
    Then following the same idea as how we argue in the proof of Lemma \ref{lem: reduce-to-sharingfixpoint} (Case \textbf{\RNum{2}}, Step \textbf{\RNum{3}}), it is enough to show the claim that 
    \begin{equation}\label{eq: polynomialcase4eq3}
         A(X_i(Z)) = \left(ZX_i(Z) + \frac{1 - Z}{1 - \alpha}\beta\right)B(X_i(Z))
    \end{equation}
    
    doesn't hold constantly for any $i \in \{1,2, \dots, k_3  \}$ unless $f$ and $g$ share two common fixed points, where 
    $$ X_i(Z) =\left(\frac{1 - \xi^i Z^{k} }{1- \xi \alpha^k}\gamma - \frac{1 - Z}{1- \alpha}\beta\right)\Big/(Z - \xi^i Z^{k}). $$
    This is because, similarly as in Lemma \ref{lem: reduce-to-sharingfixpoint}, Equation (\ref{eq: polynomialcase4eq3}) is an equation on one variable and having infinitely many roots implies it is constantly zero. 
    
    To prove the claim, we can assume that $k \neq 0$ since otherwise $\delta$ is a root of unity and we are back to the case that we have discussed. We first suppose $k > 0$ and suppose the contrary that there exists a $$i \in \{1,2, \dots, k_3  \} $$ such that Equation (\ref{eq: polynomialcase4eq3}) holds constantly. Notice that in the beginning of Case \textbf{\RNum{3}} we assumed that $\alpha/\delta$, $\alpha^2/\delta$ and $\delta^2/\alpha$ are not roots of unity, which is equivalent to $k \neq   1,2$ as we assumed $k \geq 1$.
    Also, if $f$ and $g$ only share one common fixed point, then $$\frac{\gamma}{1-\delta} \neq \frac{\beta}{1 - \alpha}$$ and
    $$\val_Z(X_i(Z)) = -  \min\{1, k\}= -1,$$
    $$\val_Z(A(X_i(Z))) = -m_1 \min\{1, k\} = -m_1,$$
    $$ \val_Z\left(\left(ZX_i(Z) + \frac{1 - Z}{1 - \alpha}\beta\right)B(X_i(Z))\right) \geq  -m_2 \min\{1,k\}  = -m_2.$$
    Suppose, Equation (\ref{eq: polynomialcase4eq3}) holds constantly for every values of $Z$, then $$m_2 \geq m_1.$$ 

    Denote $k = l_1/l_2$ where $l_1$, $l_2$ are coprime positive integers and $l_1 > l_2$ and denote $l_3$ as the order of $\xi^{-i}$. Consider the change of variable by introducing $T$ such that $T^{l_2} = Z$. Then
    $$ X_i(T)  = \left ( \frac{1 - \xi^iT^{l_1}}{1- \delta} \gamma - \frac{1 - T^{l_2}}{1 -\alpha} \beta \right)/(T^{l_2} - \xi^i T^{l_1}),$$ and 
\begin{equation}\label{eq: alpha-2-delta-1}
    A(X_i(T)) = \left(T^{l_2}X_i(T) + \frac{1 - T^{l_2}}{1 -\alpha}\beta \right)B(X_i(T)).
\end{equation}

Since $k \neq 1, 2$, we have $l_2 \neq l_1 - l_2$. For a $(l_1 -l_2)l_3$-th root of unity $\xi_0$ such that $ \xi^{l_1 -l_2}_0 = \xi^{-i}$, we have that for any $\mu \in \C^*$ such that $\mu^{l_1 - l_2} = 1$ the root of unity $\mu \xi_0$ is a zero of $T^{l_2} - \xi^iT^{l_1}$. On the other hand, for any $\mu \in \C$ such that $\mu^{l_1 - l_2} = 1$,
\begin{equation}
    \frac{1 - \xi^i (\mu \xi_0)^{l_1}}{1 - \delta}\gamma - \frac{1-(\mu \xi_0)^{l_2}}{1- \alpha}\beta = \left (\frac{\gamma}{1 - \delta} - \frac{\delta}{1 -\alpha}\right)(1 - \mu^{l_2}\xi_0^{l_2}). 
\end{equation}
Note that $\gamma /(1 - \delta) \neq \beta /(1 - \alpha)$. If $l_1 - l_2 > 1$, then there exists a $\mu_0 \in \C$ such that $\mu^{l_1 -l_2}_0 = 1$ and $1 - \mu^{l_2}_0 \xi_0^{l_2} \neq 0$, since otherwise $l_2 = m (l_1 - l_2)$ for some $m \in \N^+$ and so $l_1 = (m+1)(l_1 - l_2)$, contradicting that $l_1$ and $l_2$ are coprime. Note that this implies $T_0 \coloneqq \mu_0\xi_0$ is a pole of $X_i(T)$.

But then, $ \val_{T - T_0}(X_i(T)) \leq -1$ and Equation (\ref{eq: alpha-2-delta-1}) implies that
\begin{align}
    \val_{T-T_0}(A(X_i(T))) = m_1\val_{T - T_0}(X_i(T)) \\= (1 + m_2) \val_{T - T_0}(X_i(T)) = \val_{T - T_0}\left(\left(T^{l_2}X_i(T) + \frac{1 - T^{l_2}}{1 -\alpha}\beta \right)B(X_i(T))\right), \nonumber
\end{align}
which further implies that $m_1 = m_2 + 1$. This contradicts the inequality $m_2 \geq m_1$ we obtained above. 

Now, Suppose $l_1 = l_2 +1$. If $\xi^{l_2i}\neq 1$, then $T_1 \coloneqq \xi^{-i}$ is a pole of $$X_i(T) =  \left (\frac{\gamma}{1 - \delta}(1 - \xi^i T^{l_2 + 1}) - \frac{\delta}{1 -\alpha}(1 - T^{l_2})\right)\left(T^{l_2}(1 - \xi^iT)\right).$$ Then the valuation with respect to $T - T_1$ again gives the contradiction to $m_1 \leq m_2$.

Now, we only left to show the case when $l_1 = l_2 + 1$ and also $\xi^{il_2} = 1$. Note that we only need to obtain a contradiction when $l_2 > 1$, since we assumed $k \neq 2$. In this case, we can rewrite $X_i(T)$ as follows 
\begin{align}
    X_i(T) = \left (\frac{\gamma}{1 - \delta}(1 - (\xi^i T)^{l_2 + 1}) - \frac{\beta}{1 -\alpha}(1 - (\xi^iT)^{l_2})\right)\left(T^{l_2}(1 - \xi^iT)\right) \nonumber \\
    = \left(\left( 1 + \xi^iT + \dots + (\xi^iT)^{l_2-1}\right)\left(\frac{\gamma}{1 -\delta} - \frac{\beta}{1 - \alpha} \right) + \frac{\gamma}{1 - \delta}T^{l_2} \right)/T^{l_2}. 
\end{align}
When $T$ is close to $0$, we have 
\begin{equation}
    X_i(T) = L (T^{-l_2} + \xi^iT^{1 - l_2}) + o(T^{1 - l_2}),
\end{equation}
where we denote $L \coloneqq \gamma/(1 - \delta) - \beta/(1 - \alpha) $.
Then, we similarly have 
\begin{equation}
    T^{l_2}X_i(T) + \frac{1 - T^{l_2}}{1 -\alpha}\beta = \frac{\gamma}{1 - \delta} + L \xi^iT + o(T).
\end{equation}

Now, by looking at the $\val_{T}$, if $\gamma  = 0$, then Equation (\ref{eq: alpha-2-delta-1}) implies that $-\deg(A)l_2 = -\deg(B)l_2 + 1$, which is impossible since $l_2 > 1$. If $\gamma \neq 0$, then by looking at the $\val_T$ of Equation (\ref{eq: alpha-2-delta-1}), we have $\deg(A) = \deg(B)$. Denote $m \coloneqq \deg(A)$, $a_m$ and $b_m$ the leading coefficients of $A$ and $B$. Then  
\begin{align}
    A(X_i(T)) = a_m L^m T^{-ml_2} + ma_mL^m \xi^iT^{1 - ml_2} + o(T^{1 - ml_2}),
\end{align}
\begin{equation}
    B(X_i(T)) = b_m L^m T^{-ml_2} + mb_mL^m \xi^iT^{1 - ml_2} + o(T^{1 - ml_2}).
\end{equation}
Then Equation (\ref{eq: alpha-2-delta-1}) implies that 
\begin{align}
    a_m L^m T^{-ml_2} + ma_mL^m \xi^iT^{1 - ml_2} + o(T^{1 - ml_2}) \nonumber \\ = \left( \frac{\gamma}{1 - \delta} + L \xi^iT + o(T)\right) \left(b_m L^m T^{-ml_2} + mb_mL^m \xi^iT^{1 - ml_2} + o(T^{1 - ml_2})\right).
\end{align}
Comparing the coefficients of the first two leading terms, we have
\begin{equation}
    a_m = \frac{\gamma}{1 -\delta}b_m,
\end{equation}
\begin{equation}
    \frac{\gamma}{1 - \delta} mb_mL^m + b_mL^{m+1} = ma_mL^m.
\end{equation}
Since $b_m \neq 0$, these together imply that $L=0$, which implies that $f$ and $g$ share two common fixed points. Hence $\langle f, g \rangle$ is not a free semigroup under the compositions.

Thus, the claim is proved for $k > 0$. \\


Now suppose $k < 0$. Then, it means that there exists a pair of positive integers $(l_1, l_2)$ such that $\alpha^{l_1} \delta^{l_2} = 1$. Thus we can check directly 
\begin{equation}
    f^{l_1}(X) = \alpha^{l_1}X + \beta',
\end{equation}
\begin{equation}
    g^{l_2}(X) = \delta^{l_2}X + \gamma',
\end{equation}
\begin{equation}
    f^{l_1} \circ g^{l_2} = X + c_1,
\end{equation}
\begin{equation}
    g^{l_2} \circ f^{l_1} = X + c_2,
\end{equation}
for some constants $c_1$, $c_2$, $\beta'$ and $\gamma'$. Then obviously $ f^{l_1} \circ g^{l_2}$ commutes with $g^{l_2} \circ f^{l_1} $, which implies that $f(X)$ and $ g(X)$ don't generate a free semigroup under composition. Thus, it contradicts the assumption. \newline
\newline 
 \noindent\textbf{Case \RNum{4}.} Lastly, suppose $\alpha = \delta$, then $f^n(X) = g^n(X)$ only has a solution if $f = g$ and certainly $\langle f , g \rangle$ is not free in this case.

Now, consider the general case that $f(X)$, $g(X)$ and $c(X)$ are all defined over $\C$. We use a similar specialization argument as in the proof of \cite[Theorem 2]{HT17}. We assume $\alpha$, $\delta$ and $\alpha/\delta$ are not roots of unity, $\gamma/(1 - \delta) - \beta/ (1- \alpha) \neq 0$ and $\beta$, $\gamma$ are not both zero, since otherwise the argument above directly shows that $\langle f, g \rangle$ is not free. We construct a ring extension $R$ over $\Z$ generated by coefficients of $f(X)$, $g(X)$ and $c(X)$. Denote $K = \Frac(R)$. We do induction on the transcendence degree, $N$, of $K$. The base case is that $N=0$, which means everything is algebraic and then we showed from the discussion above that Equation (\ref{eq: targeteq1}) can only have solution for finitely many $n \in \N$. 

Now, assume that if the transcendence degree is $N - 1$, then the above assumptions on $\gamma$ , $\beta$, $\alpha$, $\delta$, $\alpha/\delta$ and $\gamma/(1 - \delta) - \beta/ (1- \alpha) $ implies that the equation only admits solutions for finitely many $n \in \N$. Take $L$ as a subfield of $K$ of transcendence degree $N-1$. We will use the notation $\hat{h}_{\overline{L}, f}$ to denote the canonical height function associated to the function $f$ defined over $\overline{L}$. Applying \cite[Theorem 4.1]{CS93}, we have that for all specializations $s : R \to \overline{L}$ of sufficiently large height, we have $\hat{h}_{\overline{L},x^2}(\alpha_s)$, $\hat{h}_{\overline{L},x^2}(\delta_s)$ and $\hat{h}_{\overline{L},x^2}(\alpha_s/\delta_s)$ are all greater than $0$ and if $\beta$, $\gamma$ and $\gamma/(1 - \delta) - \beta/(1 - \alpha)$ are not roots of unity or zero, we also have $\hat{h}_{\overline{L},x^2}(\beta_s)$,$\hat{h}_{\overline{L},x^2}(\gamma_s)$ and $\hat{h}_{\overline{L},x^2}(\gamma_s/(1 - \delta_s) - \beta_s/(1 - \alpha_s))$ are non-zero. This, together with the assumption that 
$$ \gamma/(1 - \delta) - \beta/(1 - \alpha) \neq 0,$$
$\beta $, $\gamma$ are not both zero and $\alpha$, $\delta$, $\alpha/\delta$ are not roots of unity, implies, in particular, that none of $\alpha_s$, $\delta_s$ and $\alpha_s/\delta_s$ is a root of unity or zero, $\gamma_s/(1 - \delta_s) - \beta_s/(1 - \alpha_s)$ are nonzero and $\beta_s$ and $\gamma_s$ are not both zero. Then the induction hypothesis shows that 
$$ f^n_s(X) = g^n_s(X) = c_s(X)$$
has solution for only finitely many $n \in \N $ and thus 
$$f^n(X) = g^n(X) = c(X)$$
has solution for only finitely many $n \in \N$.

\end{proof}
\begin{proof}[Proof of Lemma \ref{lem: thmCase2}]
     We have that 
    $$ f^n(X) = \alpha^n X + \frac{1 - \alpha^n}{1 - \alpha}\beta$$
    $$ g^n(X) = X + n\gamma.$$
    If $\gamma = 0$, then $\langle f , g \rangle$ is obviously not free, so we assume that $\gamma \neq 0$.
    Then \begin{equation}\label{eq: X_n-growth}
        X_n = \frac{\beta}{ 1- \alpha}  - \frac{n\gamma}{1- \alpha^n}
    \end{equation} is the solution to 
    $$ f^n(X_n) = g^n(X_n).$$

   Recall the notation that $c(X) = A(X)/B(X)$, for some coprime polynomials $A(X)$ and $B(X)$. Now suppose the equation
    \begin{equation}\label{eq: polycase3eq1}
        A(X_n) = B(X_n)\left(\alpha^n X_n + \frac{1 - \alpha^n}{1 - \alpha}\beta\right)
    \end{equation}
    is satisfied by infinitely many $n \in \N$. If $|\alpha| > 1$, then 
    $$A(X_n)/B(X_n) \sim (n/\alpha^n)^l$$
    for some integer $l$ when $n$ is large. However, 
    $$ \left(\alpha^nX_n + \frac{1 - \alpha^n}{1- \alpha}\beta\right) \sim \gamma n$$
    when $n$ is large,
    which contradicts the assumption that the Equation (\ref{eq: polycase3eq1}) holds for infinitely many $n \in \N$.
    
    On the other hand, suppose $|\alpha|< 1$. Now, take a set of places $S$ such that for any $v \notin S$, we have $A(X)$ and $B(X)$ has coefficients with norm $1$ with respect to $v$ and also 
    $$ |1 - \alpha|_v = |\alpha|_v = |\beta|_v = |\gamma|_v = 1,$$
    if $\beta \neq 0$.
    Then, by the proof of \cite[Proposition 15, \noindent\textbf{Case II}]{HT17}, when $n$ is large enough, there is a $v$ outside of $ S$ such that 
    $$ | n/(1 - \alpha^n)|_v > 1.$$
    Thus, the Equation (\ref{eq: polycase3eq1}) implies that 
    \begin{equation}\label{eq: polycase3eq2}
        \deg(A) = \deg(B) +1
    \end{equation}
    since $|X_n|_v > 1$ with the choice of $n$ and $v$ such that $| n/(1 - \alpha^n)|_v > 1$. Now, since 
    $$ \lim_{n \to \infty} |\alpha^n X_n| = 0,$$
    we have 
    $$ \lim_{n \to \infty } \left(A(X_n) - \frac{\beta}{ 1- \alpha} B(X_n)\right) = 0$$
    by Equation (\ref{eq: polycase3eq1}) and the fact 
    $$ \lim_{n \to \infty} B(X_n)\alpha^nX_n = 0.$$
    Notice that if $\beta = 0$, we would have $A(X) \equiv 0$, which contradicts Equation (\ref{eq: polycase3eq2}). Now, if $\beta \neq 0$, this together with
    $$ \lim_{n \to \infty} |X_n| = \infty,$$
    imply that 
    $$\deg(A) = \deg(B)$$
    which contradicts  Equation (\ref{eq: polycase3eq2}).

    Now, suppose $|\alpha| = 1$. By the same argument from the last case, we have $\deg(A) = \deg(B) +1 $. Let 
    $$F(n, \alpha^n) =\left( A(X_n) - B(X_n)\left(\alpha^n X_n + \frac{1 - \alpha^n}{1 - \alpha}\beta\right)\right)(1 - \alpha^n)^{\deg(A)}$$ 
    be a polynomial in $n$ and $\alpha^n$. Then by \cite[Proposition 2.5.1.4]{BGT} we have $\{F(n, \alpha^n)\}_{n \in \N}$ satisfies a linear recurrence. Thus, by \cite[Theorem 2.5.4.1]{BGT}, the set $\{n \in \N : F(n , \alpha^n) = 0\}$ is a finite union of arithmetic progressions. Since we assumed that $F(n, \alpha^n) = 0$ holds for infinitely many $n$, there exists $s_1, s_2 \in \N$ such that $F(n, \alpha^n) = 0$ for any $n \in \{s_1 m + s_2 : m \in \N\}$. Now, we consider $n$ within an infinite subsequence $I$ of $\{s_1 m + s_2 : m \in \N\}$ such that $|1 - \alpha^n| > \epsilon $ for some $\epsilon > 0$ independent from $n$ and $\alpha^n$ is not converging. The existance of such an infinite subsequence $I$ is argued in the last part of the proof of Lemma \ref{lem: lemCase1}. Thus 
    $$\lim_{n \to \infty\text{, }n\in I} F(n, \alpha^n) = 0,$$
    where $n$ ranges in the chosen subsequence $I$. Notice that by Equation (\ref{eq: X_n-growth}), we have $$X_n = -\gamma n/(1 - \alpha^n) + o(n/(1- \alpha^n))$$ for $n \in I$ and $n$ becomes large. By looking at the leading terms of $F(n, \alpha^n)$ as $n$ grows, we obtain that 
    \begin{equation}
        \lim_{n \to \infty, n \in I} (a - b \alpha^n)\left(- \frac{\gamma}{1 - \alpha^n}\right)^{\deg(A)}n^{\deg(A)} + o(n^{\deg(A)}) = 0,
    \end{equation}
    where $a$ and $b$ are the leading coefficients of $A(X)$ and $B(X)$ respectively, which implies that
    $$ \lim_{n \to \infty\text{, }n\in I} \alpha^n = a/b,$$
    since $|1/(1 - \alpha^n)|$ is bounded from both below and above.
     This gives a contradiction as $\alpha^n$ doesn't converge.
\end{proof}
\section{Proof of Theorem \ref{thm : mainnonlinear}}

The following lemma asserts that an infinite sequence $\{x_n\}$ of solutions to the equation $f^n(x_n)=c(x_n)$ for all $n\in\mathbb{N}$ is a sequence of small dynamical height.
\begin{lem}\label{smallpoints} Let $K$ be a global field. Let $\{x_n\}$ be a sequence in $\mathbb{P}^1(\overline{K})$ so that $f^n(x_n)=c(x_n)$ for all $n\in\mathbb{N}$ where $f,c\in K(x)$ and $\mathrm{deg}(f)>1$. Then $$\lim_{n\rightarrow\infty}\hat{h}_f(x_n)=0.$$
\end{lem}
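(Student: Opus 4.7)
The plan is a direct telescoping computation using the standard relations between the Weil height $h$, the canonical height $\hat h_f$, and the degree of the rational map $c$, all recorded in Proposition \ref{prop: canoheightproperties}.

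First, I would apply the functoriality property $\hat h_f(f(x)) = (\deg f)\,\hat h_f(x)$ repeatedly to obtain
\[
\hat h_f\bigl(f^n(x_n)\bigr) = (\deg f)^n\,\hat h_f(x_n).
\]
On the other hand, since $f^n(x_n)=c(x_n)$ by hypothesis, and since $\hat h_f = h + O(1)$ (Proposition \ref{prop: canoheightproperties}(ii)), we have
\[
\hat h_f\bigl(f^n(x_n)\bigr) = \hat h_f\bigl(c(x_n)\bigr) = h\bigl(c(x_n)\bigr) + O(1).
\]
Next I would apply the elementary Weil-height estimate $h(c(x)) = (\deg c)\,h(x) + O(1)$ (a consequence of functoriality for morphisms of $\mathbb P^1$; see Proposition \ref{prop: canoheightproperties}(i) applied to $c$), together with $h(x_n) = \hat h_f(x_n) + O(1)$, to deduce
\[
\hat h_f\bigl(c(x_n)\bigr) = (\deg c)\,\hat h_f(x_n) + O(1),
\]
where all $O(1)$ constants depend only on $f$ and $c$, not on $n$.

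Combining these two expressions for $\hat h_f(f^n(x_n))$ gives
\[
\bigl((\deg f)^n - \deg c\bigr)\,\hat h_f(x_n) = O(1),
\]
with an implicit constant $C$ independent of $n$. Since $\deg f \geq 2$, for all sufficiently large $n$ we have $(\deg f)^n - \deg c \geq \tfrac12 (\deg f)^n$, so
\[
0 \leq \hat h_f(x_n) \leq \frac{2C}{(\deg f)^n} \xrightarrow[n\to\infty]{} 0,
\]
which yields the claim.

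This proof is essentially a one-line computation once the basic height machinery is in hand, so I do not anticipate any serious obstacle. The only point that requires a small comment is the uniformity of the $O(1)$ constants: both the comparison $\hat h_f = h + O(1)$ and the functoriality $h\circ c = (\deg c)\, h + O(1)$ produce constants depending only on $f$ and $c$ respectively, and not on the point to which they are applied, so they may safely be absorbed into a single constant $C$ when bounding $\hat h_f(x_n)$.
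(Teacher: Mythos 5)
Your proposal is correct and matches the paper's proof in substance: both establish $\hat h_f(c(x_n)) = (\deg c)\hat h_f(x_n) + O(1)$ via the standard height comparisons, equate it with $(\deg f)^n \hat h_f(x_n)$ using functoriality, and conclude from $((\deg f)^n - \deg c)\hat h_f(x_n) = O(1)$ together with $\deg f > 1$. The only cosmetic difference is that you unwind the chain of $O(1)$ identities step by step while the paper compresses them into one line.
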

\begin{proof} For each $x\in \mathbb{P}^1(\overline{K}$), we have by Theorem \ref{prop: canoheightproperties} that
$$\hat{h}_f(c(x))=(\mathrm{deg}\, c)\hat{h}_f(x)+O(1)$$ where the implied constants do not depend on $x$ (but coefficients of $f$). For each $n\in\mathbb{N}$, the sequence $\{x_n\}_{n\in\mathbb{N}}$ satisfies $f^n(x_n)=c(x_n)$ and the functorial property of canonical height yields
$$\hat{h}_f(c(x_n))=\hat{h}_f(f^n(x_n))=(\mathrm{deg}\, f)^n\hat{h}_f(x_n).$$ This implies
$$(\mathrm{deg}\, f)^n\hat{h}_f(x_n)=(\mathrm{deg}\, c)\hat{h}_f(x_n)+O(1)$$ where the implied constant does not depend on $x_n$ and so does not depend on $n$. Thus the quantity $((\mathrm{deg}\, f)^n-\mathrm{deg}\,c)\hat{h}_f(x_n)$ is bounded by a constant independent of $n$. As the term $((\mathrm{deg}\, f)^n-\mathrm{deg}\,c)$  becomes arbitrary large as $n$ gets large because $\mathrm{deg}\,f>1$, it follows that $\hat{h}_f(x_n)$ must become arbitrary small and approaches to $0$ as $n\rightarrow+\infty$.
\end{proof}

The following result is well-known in arithmetic dynamics and it asserts that if two rational functions share the same (non-repeating) sequence of points of small height, their dynamical height must be identical.    The strategy of the proof is similar to that of Baker-DeMarco \cite[Theorem 1.2]{BD11} but we include the proof here for completeness and for the convenience of the reader. For  different proof (in the language of metrics on line bundle of $\mathbb{P}^1$) and ingredient we refer the reader to \cite[Corollary 14]{PST12}.\\
\begin{prop}\label{prop: shareheight} Let $K$ be a global field and let $f,g\in K(x)$ be rational functions of degree greater than $1$. Suppose that there exists an infinite distinct sequence $\{x_n\}_{n\in\mathbb{N}}\subset \mathbb{P}^1(\overline{K})$ such that 
$$\lim_{n\rightarrow\infty}\hat{h}_f(x_n)=\lim_{n\rightarrow\infty}\hat{h}_g(x_n)=0.$$ Then $\hat{h}_f=\hat{h}_g.$
\end{prop}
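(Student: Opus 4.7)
The plan is to apply the arithmetic equidistribution theorem at each place $v \in M_K$ to show that the equilibrium measures $\mu_{f,v}$ and $\mu_{g,v}$ coincide, then use the Laplacian characterization of the Arakelov-Green function to conclude that $G_{f,v} = G_{g,v}$ at each place, and finally sum the local identities via the height decomposition to obtain $\hat{h}_f = \hat{h}_g$.

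First, for each $v \in M_K$, consider the discrete probability measures
\[
\mu_{n,v} := \frac{1}{|\mathrm{Gal}(\overline{K}/K) \cdot x_n|} \sum_{y \in \mathrm{Gal}(\overline{K}/K) \cdot x_n} \delta_y
\]
on $\mathbb{P}^1_{\mathrm{Berk},v}$. Since $\{x_n\}$ is an infinite sequence of distinct points with $\hat{h}_f(x_n) \to 0$, the arithmetic equidistribution theorem on the Berkovich projective line (Baker--Rumely, Chambert-Loir, Favre--Rivera-Letelier; Baker in the function-field case, as referenced in the proof sketch) asserts that $\mu_{n,v}$ converges weakly to $\mu_{f,v}$. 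Since the same sequence also satisfies $\hat{h}_g(x_n) \to 0$, the same measures converge weakly to $\mu_{g,v}$. By uniqueness of weak limits, $\mu_{f,v} = \mu_{g,v}$ for every $v \in M_K$.

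Next, set $H_v(x,y) := G_{f,v}(x,y) - G_{g,v}(x,y)$. By the Laplacian identity \eqref{eq:laplacian} applied to both $G_{f,v}$ and $G_{g,v}$,
\[
\Delta_x H_v(x,y) = \mu_{f,v} - \mu_{g,v} = 0
\]
for each fixed $y$. Since $G_{f,v}$ and $G_{g,v}$ share the same logarithmic singularity along the diagonal, $H_v$ extends continuously to all of $\mathbb{P}^1_{\mathrm{Berk},v} \times \mathbb{P}^1_{\mathrm{Berk},v}$. Hence $H_v(\cdot, y)$ is a continuous harmonic function on the compact connected space $\mathbb{P}^1_{\mathrm{Berk},v}$, therefore constant in $x$; by symmetry $H_v$ is a constant $c_v$. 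Integrating the constant $H_v$ against $\mu_{f,v} \otimes \mu_{f,v} = \mu_{g,v} \otimes \mu_{g,v}$ and invoking the normalizations $\iint G_{f,v}\, d\mu_{f,v}\, d\mu_{f,v} = 0$ and $\iint G_{g,v}\, d\mu_{g,v}\, d\mu_{g,v} = 0$ forces $c_v = 0$. Thus $G_{f,v} \equiv G_{g,v}$ for every $v$.

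Finally, summing $N_v G_{f,v}(x,y) = N_v G_{g,v}(x,y)$ over $v$ and invoking the decomposition $\hat{h}_f(x) + \hat{h}_f(y) = \sum_{v \in M_K} N_v G_{f,v}(x,y)$ together with its analogue for $g$, we obtain $\hat{h}_f(x) + \hat{h}_f(y) = \hat{h}_g(x) + \hat{h}_g(y)$ for every pair of distinct points $x,y \in \mathbb{P}^1(\overline{K})$. Writing $\phi := \hat{h}_f - \hat{h}_g$, this reads $\phi(x) + \phi(y) = 0$ for all $x \neq y$; testing against any three distinct points forces $\phi \equiv 0$, hence $\hat{h}_f = \hat{h}_g$. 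The main technical subtlety I anticipate is invoking equidistribution in the full generality of global fields: in the function-field setting one typically needs to address isotriviality (or cite the general framework of Chambert-Loir and Yuan), and one must keep in mind that the Arakelov-Green functions are defined on the Berkovich analytification and the harmonicity/constancy argument happens there rather than on the classical points.
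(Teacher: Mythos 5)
Your argument follows the same skeleton as the paper's: equidistribution of the small points forces $\mu_{f,v}=\mu_{g,v}$ at every place, the Laplacian characterization then shows $G_{f,v}-G_{g,v}$ is a constant $c_v$, and the height decomposition transfers this to the canonical heights. Where you diverge is in how you kill the constant. The paper sums over places to get a single global constant $C$, then sets $y=x_n$ and lets $n\to\infty$ to get $\hat h_f(x)=\hat h_g(x)+C$, and finally tests $x$ at $f$-preperiodic and $g$-preperiodic points to squeeze $C=0$. You instead observe that integrating the constant $c_v$ against $\mu_{f,v}\otimes\mu_{f,v}$ (which equals $\mu_{g,v}\otimes\mu_{g,v}$ since the measures agree) and using the normalization $\iint G_{f,v}\,d\mu_{f,v}\,d\mu_{f,v}=\iint G_{g,v}\,d\mu_{g,v}\,d\mu_{g,v}=0$ gives $c_v=0$ place-by-place. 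That local argument is clean and correct, and it slightly shortens the endgame; your subsequent ``three distinct points'' deduction of $\phi\equiv 0$ also works (though the paper's substitution $y=x_n$ avoids needing it).

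The one genuine gap is the isotrivial function-field case, which you flag but do not resolve. Over a function field, when $f$ (say) is isotrivial, the canonical height no longer detects preperiodicity (points of $\mathbb{P}^1(\overline{\mathbb{Q}})$ all have height zero, but are not all preperiodic), and the equidistribution input you are relying on is not available off the shelf. The paper handles this by a separate case (Case III): if one of $f,g$ is isotrivial then, following Baker--DeMarco, the existence of a common sequence of small points forces the other to be isotrivial as well, so both are conjugate to maps over $\overline{\mathbb{Q}}$ and the number-field argument applies. Your proof needs this reduction; flagging the issue is not a substitute for carrying it out. Also, in the non-isotrivial function-field case (Case II in the paper), the final ``test at preperiodic points'' step (or your integral-normalization step) uses Theorem~\ref{thm: BakBenIsotrivia} to guarantee that zero-height points are preperiodic; you should make that dependency explicit rather than invoking only Proposition~\ref{prep: heightpreper}, which is stated for number fields.
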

\begin{proof} Suppose that $\{x_n\}_{n\geq1}$ is a non-repeating sequence in $\mathbb{P}^1(\overline{K})$ such that $\hat{h}_f(x_n)\rightarrow0$ and $\hat{h}_g(x_n)\rightarrow0$ as $n\rightarrow+\infty$.\\
\noindent\textbf{Case I.} Suppose that $f,g\in \overline{\mathbb{Q}}(x)$. Suppose that $L$ is a number field for which both $f$ and $g$ are defined. Thus $\{x_n\}$ must be a sequence of algebraic points in $\mathbb{P}^1(\overline{L})$ for all $n\in \mathbb{N}$. Since $\{x_n\}$ is a sequence of point of small dynamical height with respect to both $f$ and $g$, so the sequence of probability measure supported equally on the sets  $\mathrm{Gal}(\overline{L}/L)\cdot x_n$  of conjugates of $x_n$ converges weakly to the canonical measures
		$\mu_{f,v}$ and $\mu_{g,v}$ at all places $v$ of $L$ in the Berkovich projective line $\mathbb{P}^1_{\mathrm{Berk},v}$ by the arithmetic equidistribution theorem of Baker-Rumely \cite{BR06}, Favre-Rivera-Letelier \cite{FRL06}, and Chambert-Loir \cite{CL06}. Thus $\mu_{f,v}=\mu_{g,v}$.
   From the definition of the Laplacian (\ref{eq:laplacian}),  for all $v\in M_L$ and all $x,y\in \mathbb{P}^1_{\mathrm{Berk},v}$, we have that the $v$-adic Arakelov Green's functions of $f$ and $g$ must differ by a constant $C_v(y)\in\mathbb{R}$  (\cite[Proposition 5.28]{BR10})\begin{equation}G_{\mu_f,v}(x,y)=G_{\mu_g,v}(x,y)+C_v(y)\label{eq: equivgreenconstantdependsony}.\end{equation} Using the fact that the Arakelov-Green's function is symmetric, fixing $x$ and viewing as a function of $y$, we can deduce that $C_v(y)= C_v\in\mathbb{R}$ is independent of $y$. Thus the equation (\ref{eq: equivgreenconstantdependsony}) becomes 
  \begin{equation}G_{\mu_f,v}(x,y)=G_{\mu_g,v}(x,y)+C_v.\label{eq: equivarakelovgreenfns}\end{equation}
  Summing equation (\ref{eq: equivarakelovgreenfns}) over all places $v\in M_L$ with appropriate local degree, it follows that
$$\hat{h}_f(x)+\hat{h}_f(y)=\hat{h}_g(x)+\hat{h}_g(y)+C$$ where $C=\sum_{v\in M_L}N_vC_v$ and is independent of $y$. 
Setting $y=x_n$ and letting $n\rightarrow\infty$, we have
$$\hat{h}_f(x)=\hat{h}_g(x)+C.$$ 
Taking $x$ to be $f$-preperiodic, we obtain $C=-\hat{h}_g(x)\leq0$. Similarly, taking $x$ to be $g$-preperiodic, it yields $C=\hat{h}_f(x)\geq0$. Hence $C = 0$ and the desired result follows immediately.\\
\noindent\textbf{Case II.} Let $L$ be a finitely generated field extension of $\overline{\mathbb{Q}}$ which is generated by all coefficients of $f$ and $g$. Suppose that neither $f$ nor $g$ is isotrivial (i.e., they are not conjugate to a rational function defined over $\overline{\mathbb{Q}}$).  Thus the sequence $\{x_n\}_{n\in\mathbb{N}}$ is defined over $\mathbb{P}^1(\overline{L}).$ Then the proof follows along similar lines to that of Case I. The last step to conclude that $C = 0$, we instead use the Theorem \ref{thm: BakBenIsotrivia} for function field.\\
\noindent\textbf{Case III.} Suppose that at least one of $f$ and $g$ is isotrivial. Without loss of generality, assume that $f$ is isotrivial (i.e., it is conjugate to a rational function defined over $\overline{\mathbb{Q}}$). Then we follow the argument used in Baker-DeMarco \cite[Theorem 2]{BD11} to deduce that $g$ must also be isotrivial. Hence both $f$ and $g$ are conjugate to a rational function defined over $\overline{\mathbb{Q}}$ and we are back to Case I. Therefore, the desired result is established.
\end{proof}

\noindent Recall that 
\begin{equation*}\mathrm{Prep}(f):=\{x\in\mathbb{C}\mid f^n(x)=f^m(x),\,\,\text{for}\,\,m,n\in\mathbb{N}\}\end{equation*}the collection of all preperiodic points of $f$. It is  infinite and countable \cite[$\S 6.2$]{Bea91}.
\begin{prop} \label{prop: degreegreaterthan1}
Let $f,g\in \mathbb{C}(x)$ be two rational functions  of degree greater than $1$. Suppose that the semigroup generated by $f$ and $g$ under composition is free. Then there are finitely many $\lambda\in \mathbb{C}$ such that there are $m,n\in\mathbb{N}$ with the following properties:
\begin{enumerate}
\item[(i)] $f^m(x)\neq c(x)$
\item[(ii)] $g^n(x)\neq c(x)$
\item[(iii)] $f^m(\lambda)=g^n(\lambda)=c(\lambda)$.
\end{enumerate}
\end{prop}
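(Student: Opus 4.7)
The plan is to argue by contradiction, following the strategy sketched in the introduction. Suppose infinitely many $\lambda\in\mathbb{C}$ admit pairs $(m_\lambda,n_\lambda)$ satisfying (i)--(iii). First, I would observe that along such an infinite family both exponents must tend to infinity: if some fixed $m_0$ recurred for infinitely many $\lambda$, then $f^{m_0}(x)-c(x)$ would vanish at infinitely many points and hence be identically zero, violating (i); the same argument applied to $n_\lambda$ rules out a bounded $n_\lambda$ via (ii). After passing to a subsequence we may therefore assume $m_i,n_i\to\infty$ with corresponding roots $\lambda_i$.

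Next, let $L$ be the finitely generated extension of $\mathbb{Q}$ generated by the coefficients of $f$, $g$ and $c$; then $L$ is a global field in the sense of the preliminaries, and each $\lambda_i$, being a root of $f^{m_i}(x)-c(x)$, lies in $\mathbb{P}^1(\overline{L})$. Applying Lemma \ref{smallpoints} once to the relation $f^{m_i}(\lambda_i)=c(\lambda_i)$ and once to $g^{n_i}(\lambda_i)=c(\lambda_i)$ yields $\hat h_f(\lambda_i)\to 0$ and $\hat h_g(\lambda_i)\to 0$. Since the $\lambda_i$ are pairwise distinct, Proposition \ref{prop: shareheight} then forces the identity $\hat h_f=\hat h_g$ on $\mathbb{P}^1(\overline{L})$.

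From this equality I would deduce that $\Prep(f)=\Prep(g)$. In the number field case this is immediate from Proposition \ref{prep: heightpreper}; in the function field case one first handles the isotriviality dichotomy exactly as in Cases II and III of the proof of Proposition \ref{prop: shareheight}, and then invokes Theorem \ref{thm: BakBenIsotrivia}. The coincidence of the (infinite) preperiodic sets of $f$ and $g$ is a very strong rigidity constraint: applying the Tits alternative for rational maps of \cite{BHPT}, as advertised in the proof strategy, one concludes that $\langle f,g\rangle$ cannot be a free semigroup under composition, contradicting the standing hypothesis of the proposition.

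I expect the genuine obstacle to be this last rigidity step. The equidistribution machinery only carries us as far as the coincidence of canonical heights (equivalently, of canonical measures, equivalently of the preperiodic sets); converting that analytic/arithmetic coincidence into an actual compositional relation between $f$ and $g$ is exactly the role of the Tits alternative for rational maps and is the deepest external input required. Everything prior to that invocation is a routine deployment of results already recorded in the preliminaries, together with the elementary observation in the first paragraph that rules out bounded exponents using hypotheses (i) and (ii).
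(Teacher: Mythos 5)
Your proposal is correct and follows essentially the same route as the paper's proof: contradiction via Lemma \ref{smallpoints} applied to both relations, then Proposition \ref{prop: shareheight} to force $\hat h_f = \hat h_g$, then coincidence of preperiodic sets (splitting into number-field and function-field cases, with the isotriviality reduction), and finally the Tits alternative of \cite{BHPT} to contradict freeness. The one refinement you add --- explicitly justifying that the exponents $m_i, n_i$ must tend to infinity using hypotheses (i) and (ii) --- is a detail the paper states without proof, and your argument for it is the right one.
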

\begin{proof}  Let $K$ be a field generated by all coefficients of $f,g,$ and $c$  over $\mathbb{Q}$.  Thus $K$ is either a number field or a function field of finite transcendence degree with constant field $K\cap\overline{\mathbb{Q}}$.\\
\indent Suppose, on the contrary, that there is an infinite distinct sequence $\{\lambda_i\}_{i\in\mathbb{N}}$ such that for every $i$, there exist $m_i$ and $n_i$ so that $f^{m_i}(x)\neq c(x), g^{n_i}(x)\neq c(x)$, and $\lambda_i$ is solution to both $f^{m_i}(x)=c(x)$ and $g^{n_i}(x)=c(x)$. Note that $m_i,n_i\rightarrow+\infty$ as $i\rightarrow+\infty$. It follows from Lemma \ref{smallpoints} that 
$$\lim_{i\rightarrow\infty}\hat{h}_f(\lambda_i)=\lim_{i\rightarrow\infty}\hat{h}_g(\lambda_i)=0.$$
It is, then, immediately from Proposition \ref{prop: shareheight} that  $\hat{h}_f=\hat{h}_g$.\\
\noindent\textbf{Case I.} Suppose that $K$ is a number field. Proposition \ref{prep: heightpreper} asserts that the canonical heights $\hat{h}_f$ and $\hat{h}_g$ vanish precisely on the set of $f$-preperiodic points and $g$-preperiodic points, respectively. Then it follows that $\mathrm{Prep}(f)=\mathrm{Prep}(g)$ (i.e., $f$ and $g$ share the same set of preperiodic points). Applying the result of Bell-Haung-Peng-Tucker \cite[Corollary 4.12]{BHPT} to deduce that the semigroup generated by $f$ and $g$ under composition is not free. Hence it contradicts the assumption.\\
\noindent\textbf{Case II.} Suppose that $K$ is a function field of finite transcendence degree and neither $f$ nor $g$ is isotrivial. Thus $f$ and $g$ must share the same set of preperiodic points by Theorem \ref{thm: BakBenIsotrivia}. Again, applying the result of \cite{BHPT}, we have that $\langle f,g\rangle$ is not free.\\
\noindent\textbf{Case III.} Assume that at least one of $f$ and $g$ is isotrivial. Without loss of generality, assume that $f$ is isotrivial. We claim that $g$ must also be isotrivial. This follows immediately from a weak Northcott result of Baker \cite[Theorem 1.6]{Ba09}. Thus both $f$ and $g$ are conjugate to a rational function defined over $\overline{\mathbb{Q}}$. We are back to Case I. 
\end{proof}
Next, we treat the case when  one of rational functions $f$ and $g$ is of degree $1$. Here we briefly discuss the proof idea due to Hsia-Tucker \cite{HT17} and it goes through for rational functions $f$ and $g$ without any changes.
\begin{prop} \label{prop: degatleast1}
Let $f,g\in \mathbb{C}(x)$ be two rational functions  such that $\mathrm{deg}(f)>1$ and $\mathrm{deg}(g)=1$. Suppose that the semigroup generated by $f$ and $g$ under composition is free. Then there are finitely many $\lambda\in \mathbb{C}$ such that there are $m,n\in\mathbb{N}$ with the following properties:
\begin{enumerate}
\item[(i)] $f^m(x)\neq c(x)$
\item[(ii)] $g^n(x)\neq c(x)$
\item[(iii)] $f^m(\lambda)=g^n(\lambda)=c(\lambda)$.
\end{enumerate}
\end{prop}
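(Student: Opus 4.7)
The plan is to adapt verbatim the polynomial-map argument of Hsia--Tucker indicated in the proof strategy. The essential feature distinguishing this case from Proposition \ref{prop: degreegreaterthan1} is that, since $\deg g = 1$, the equation $g^n(x) = c(x)$ has degree in $x$ bounded independently of $n$; this supplies a uniform degree bound on putative solutions and allows a direct Northcott-type contradiction, bypassing the equidistribution machinery of the previous proposition.

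I would suppose, for contradiction, that there is an infinite sequence $\{\lambda_i\}_{i\in\mathbb{N}}$ of distinct complex numbers and positive integers $m_i,n_i$ satisfying (i), (ii), (iii), and let $K$ be a finitely generated field over $\mathbb{Q}$ containing all coefficients of $f$, $g$, and $c$. A pigeonhole step forces $m_i\to\infty$: otherwise infinitely many $\lambda_i$ would be roots of one of finitely many nontrivial rational equations $f^m(x)-c(x)=0$, each with only finitely many solutions. Applying Lemma \ref{smallpoints} to the sequence $\{\lambda_i\}$ and the map $f$ (which has $\deg f>1$) then yields $\hat{h}_f(\lambda_i)\to 0$, so in particular $\{h(\lambda_i)\}$ is bounded by Proposition \ref{prop: canoheightproperties}(ii).

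On the other hand, writing $g^{n_i}(x) = (a_{n_i}x+b_{n_i})/(c_{n_i}x+d_{n_i})$ and clearing denominators in $g^{n_i}(x)-c(x)=0$ produces a polynomial equation in $x$ over $K$ of degree at most $\deg c + 1$, uniformly in $i$; hypothesis (ii) ensures the equation is nontrivial, so $[K(\lambda_i):K]\leq \deg c+1$ for every $i$. When $K$ is a number field, Northcott's theorem immediately contradicts the existence of infinitely many distinct $\lambda_i$ of bounded Weil height and bounded degree over $K$. Over a function field, the same contradiction follows from the Northcott-type property underlying Theorem \ref{thm: BakBenIsotrivia} (cf. \cite{Ba09}) provided $f$ is not isotrivial; the isotrivial case reduces to the number field setting by the conjugation argument used in Case III of Proposition \ref{prop: degreegreaterthan1}.

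The main obstacle is essentially cosmetic: carefully tracking the uniform degree bound across the number-field versus function-field dichotomy and through the Möbius normalization of $g^{n_i}$. Since no step uses polynomial-specific structure beyond $\deg g = 1$, the argument of \cite[Proposition 9]{HT17} transfers without modification, as the authors assert. The hypothesis that $\langle f,g\rangle$ is free is not actually invoked in this degree configuration; it is retained only to parallel the statement of Proposition \ref{prop: degreegreaterthan1}.
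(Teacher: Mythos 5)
Your proposal follows the same route as the paper's own proof: apply Lemma \ref{smallpoints} to the sequence $\{\lambda_i\}$ (using $\deg f>1$ and $m_i\to\infty$) to get bounded Weil height, use $\deg g=1$ to bound $[K(\lambda_i):K]$ uniformly via the Möbius form of $g^{n_i}$, and invoke Northcott (or its function-field analogue) to conclude. You fill in a couple of details the paper leaves implicit — the pigeonhole step forcing $m_i\to\infty$, and the slightly sharper degree bound $\deg c+1$ rather than $\deg c$ — and your observation that the freeness hypothesis is not actually used in this degree configuration is correct.
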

\begin{proof}   Again, $K$ is a field generated by all coefficients of $f, g$ and $c$ over $\mathbb{Q}$. Thus $K$ is either a number field or a function field of finite transcendence degree.\\
\noindent\textbf{Case I.} Suppose that $K$ is a number field. We follow closely the argument of Hsia-Tucker  \cite[Proposition 9]{HT17}. For any infinite distinct sequence of $\{\lambda_i\}$ and $\{m_i\}$ such that $f^{m_i}(\lambda_i)=c(\lambda_i)$ for every $i\in\mathbb{N}$, we have that $\{\lambda_i\}$ is a sequence of small height with respect to $f$ by Lemma \ref{smallpoints}. Also, the condition (iii) yields that any solution $\lambda\in\mathbb{C}$ to the equation $g^n(\lambda)=c(\lambda)$ is of bounded degree (i.e., degree at most $\mathrm{deg}(c)$ since $\mathrm{deg}(g)=1$) over $K$. Hence the Northcott's property implies that there are only finitely many $\lambda\in\mathbb{C}$ satisfying (i), (ii), and (iii).\\
\noindent\textbf{Case II.} Suppose that $K$ is a function field of finite transcendence degree. We refer the reader to Hsia-Tucker \cite[Proposition 9]{HT17} as the proof is verbatim to the polynomial maps setting.
\end{proof}
We are now ready to prove the main result of this section. 
\begin{thm}
    Let $f(x)$ and $g(x)$ be rational functions in $\mathbb{C}(x)$, at least one of which has degree greater than one. Suppose that semigroup generated by $f$ and $g$ under composition is free and $c(x)$ is not a compositional power of $f$ or $g$. Then there are only finitely many $\lambda\in\mathbb{C}$ such that 
    $$f^m(\lambda)=g^n(\lambda)=c(\lambda).$$ for some $m,n\in\mathbb{N}$.
\end{thm}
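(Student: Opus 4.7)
The plan is to deduce this theorem directly from Proposition \ref{prop: degreegreaterthan1} and Proposition \ref{prop: degatleast1}, using the hypothesis on $c$ to eliminate the awkward conditions (i) and (ii) appearing in those statements. Both propositions already give a finiteness conclusion over all $\lambda$ that satisfy condition (iii) for some $m,n$, provided the auxiliary rational-function inequalities $f^m(x) \neq c(x)$ and $g^n(x) \neq c(x)$ hold. So the essential content of the proof is to observe that the hypothesis on $c$ forces these inequalities to hold for every $m,n \in \mathbb{N}$.

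Concretely, I would first argue that if $f^m(x) = c(x)$ as elements of $\mathbb{C}(x)$ for some $m \in \mathbb{N}$, then by definition $c$ is a compositional power of $f$, contradicting the hypothesis; the same reasoning rules out $g^n(x) = c(x)$ for any $n$. Thus conditions (i) and (ii) in Propositions \ref{prop: degreegreaterthan1} and \ref{prop: degatleast1} are satisfied vacuously for every pair $(m,n)$, so whichever of the two propositions we invoke, its conclusion covers all $\lambda$ with $f^m(\lambda) = g^n(\lambda) = c(\lambda)$ for some $m,n$.

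Next, I would split into two cases according to the degree hypothesis. If both $\deg f > 1$ and $\deg g > 1$, then Proposition \ref{prop: degreegreaterthan1} applies, using the freeness of the semigroup generated by $f$ and $g$ (which is the assumption of compositional independence), and produces a finite set of admissible $\lambda$. Otherwise, by the hypothesis that at least one of $f,g$ has degree greater than one, exactly one of them has degree one; after swapping the roles of $f$ and $g$ if necessary to arrange $\deg f > 1 = \deg g$, Proposition \ref{prop: degatleast1} applies and again yields a finite set. Combining the two cases gives the desired finiteness conclusion.

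There is no real obstacle: all the substantive work, involving small heights, equidistribution, the Tits alternative of Bell--Huang--Peng--Tucker, and the Northcott argument for the mixed-degree case, is already encapsulated in Propositions \ref{prop: degreegreaterthan1} and \ref{prop: degatleast1}. The only subtlety is the bookkeeping remark that ``$c$ is not a compositional power of $f$ or $g$'' is exactly the hypothesis needed to strip off conditions (i) and (ii), and this is immediate from the definition of compositional power.
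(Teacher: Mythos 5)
Your proof is correct and matches the paper's own argument almost exactly: the paper's proof of this theorem is precisely the two-case split into Proposition \ref{prop: degreegreaterthan1} (both degrees $>1$) and Proposition \ref{prop: degatleast1} (exactly one degree $>1$). The only thing you spell out that the paper leaves implicit is the observation that the hypothesis on $c$ makes conditions (i) and (ii) of those propositions vacuous, which is a helpful bit of bookkeeping but not a different route.
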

\begin{proof} The proof divides into two cases:\\
\noindent\textbf{Case I.} If $\mathrm{deg} (f), \mathrm{deg} (g)>1$, then the desired result follows immediately from Proposition \ref{prop: degreegreaterthan1}. \\
\noindent\textbf{Case II.} If one of rational functions $f$ and $g$ is of degree one (without loss of generality, assume that $\mathrm{deg}(f)>1$ and $\mathrm{deg}(g)=1$), then the desired conclusion follows from Proposition \ref{prop: degatleast1}.
\end{proof}
	\section*{ Acknowledgements}
 We thank Jason Bell for helpful suggestions and conversations and Nathan Grieve for useful comments concerning the exposition. The second author was supported in part by NSERC grant RGPIN-2022-02951. We are grateful to the referee for their  careful reading and numerous helpful suggestions which greatly improve the article.

\end{document}